\documentclass[12pt,reqno]{smfart}
\usepackage{amsmath,amssymb,smfthm,stmaryrd,epic,enumerate}
\usepackage[frenchb]{babel}
\usepackage[latin1]{inputenc}
\usepackage[all]{xy}
\usepackage{a4wide}





\usepackage{lmodern}

\NumberTheoremsIn{section}\SwapTheoremNumbers

\begin{document}
\newtheorem{prop-defi}[smfthm]{Proposition-DÈfinition}
\newtheorem{notas}[smfthm]{Notations}
\newtheorem{nota}[smfthm]{Notation}
\newtheorem{defis}[smfthm]{DÈfinitions}
\newtheorem{hypo}[smfthm]{HypothËse}

\def\Xm{{\mathbb X}}
\def\Tm{{\mathbb T}}
\def\Am{{\mathbb A}}
\def\Fm{{\mathbb F}}
\def\Mm{{\mathbb M}}
\def\Nm{{\mathbb N}}
\def\Pm{{\mathbb P}}
\def\Qm{{\mathbb Q}}
\def\Zm{{\mathbb Z}}
\def\Dm{{\mathbb D}}
\def\Cm{{\mathbb C}}
\def\Rm{{\mathbb R}}
\def\Gm{{\mathbb G}}
\def\Lm{{\mathbb L}}
\def\Km{{\mathbb K}}
\def\Om{{\mathbb O}}
\def\Em{{\mathbb E}}

\def\BC{{\mathcal B}}
\def\QC{{\mathcal Q}}
\def\TC{{\mathcal T}}
\def\ZC{{\mathcal Z}}
\def\AC{{\mathcal A}}
\def\CC{{\mathcal C}}
\def\DC{{\mathcal D}}
\def\EC{{\mathcal E}}
\def\FC{{\mathcal F}}
\def\GC{{\mathcal G}}
\def\HC{{\mathcal H}}
\def\IC{{\mathcal I}}
\def\JC{{\mathcal J}}
\def\KC{{\mathcal K}}
\def\LC{{\mathcal L}}
\def\MC{{\mathcal M}}
\def\NC{{\mathcal N}}
\def\OC{{\mathcal O}}
\def\PC{{\mathcal P}}
\def\UC{{\mathcal U}}
\def\VC{{\mathcal V}}
\def\XC{{\mathcal X}}
\def\SC{{\mathcal S}}

\def\BF{{\mathfrak B}}
\def\AF{{\mathfrak A}}
\def\GF{{\mathfrak G}}
\def\EF{{\mathfrak E}}
\def\CF{{\mathfrak C}}
\def\DF{{\mathfrak D}}
\def\JF{{\mathfrak J}}
\def\LF{{\mathfrak L}}
\def\MF{{\mathfrak M}}
\def\NF{{\mathfrak N}}
\def\XF{{\mathfrak X}}
\def\UF{{\mathfrak U}}
\def\KF{{\mathfrak K}}
\def\FF{{\mathfrak F}}

\def \hi{\HC}

\def \longmapright#1{\smash{\mathop{\longrightarrow}\limits^{#1}}}
\def \mapright#1{\smash{\mathop{\rightarrow}\limits^{#1}}}
\def \lexp#1#2{\kern \scriptspace \vphantom{#2}^{#1}\kern-\scriptspace#2}
\def \linf#1#2{\kern \scriptspace \vphantom{#2}_{#1}\kern-\scriptspace#2}
\def \linexp#1#2#3 {\kern \scriptspace{#3}_{#1}^{#2} \kern-\scriptspace #3}

\def \Ext{\mathop{\mathrm{Ext}}\nolimits}
\def \ad{\mathop{\mathrm{ad}}\nolimits}
\def \sh{\mathop{\mathrm{Sh}}\nolimits}
\def \irr{\mathop{\mathrm{Irr}}\nolimits}
\def \FH{\mathop{\mathrm{FH}}\nolimits}
\def \FPH{\mathop{\mathrm{FPH}}\nolimits}
\def \coh{\mathop{\mathrm{Coh}}\nolimits}
\def \res{\mathop{\mathrm{res}}\nolimits}
\def \op{\mathop{\mathrm{op}}\nolimits}
\def \rec {\mathop{\mathrm{rec}}\nolimits}
\def \art{\mathop{\mathrm{Art}}\nolimits}
\def \hyp {\mathop{\mathrm{Hyp}}\nolimits}
\def \cusp {\mathop{\mathrm{Cusp}}\nolimits}
\def \scusp {\mathop{\mathrm{Scusp}}\nolimits}
\def \Iw {\mathop{\mathrm{Iw}}\nolimits}
\def \JL {\mathop{\mathrm{JL}}\nolimits}
\def \speh {\mathop{\mathrm{Speh}}\nolimits}
\def \isom {\mathop{\mathrm{Isom}}\nolimits}
\def \Vect {\mathop{\mathrm{Vect}}\nolimits}
\def \groth {\mathop{\mathrm{Groth}}\nolimits}
\def \hom {\mathop{\mathrm{Hom}}\nolimits}
\def \deg {\mathop{\mathrm{deg}}\nolimits}
\def \val {\mathop{\mathrm{val}}\nolimits}
\def \det {\mathop{\mathrm{det}}\nolimits}
\def \rep {\mathop{\mathrm{Rep}}\nolimits}
\def \spec {\mathop{\mathrm{Spec}}\nolimits}
\def \fr {\mathop{\mathrm{Fr}}\nolimits}
\def \frob {\mathop{\mathrm{Frob}}\nolimits}
\def \ker {\mathop{\mathrm{Ker}}\nolimits}
\def \im {\mathop{\mathrm{Im}}\nolimits}
\def \Red {\mathop{\mathrm{Red}}\nolimits}
\def \red {\mathop{\mathrm{red}}\nolimits}
\def \aut {\mathop{\mathrm{Aut}}\nolimits}
\def \diag {\mathop{\mathrm{diag}}\nolimits}
\def \spf {\mathop{\mathrm{Spf}}\nolimits}
\def \Def {\mathop{\mathrm{Def}}\nolimits}
\def \twist {\mathop{\mathrm{Twist}}\nolimits}
\def \scusp {\mathop{\mathrm{Scusp}}\nolimits}
\def \Id {{\mathop{\mathrm{Id}}\nolimits}}
\def \lie {{\mathop{\mathrm{Lie~}}\nolimits}}
\def \Ind{\mathop{\mathrm{Ind}}\nolimits}
\def \ind {\mathop{\mathrm{ind}}\nolimits}
\def \loc {\mathop{\mathrm{Loc}}\nolimits}
\def \top {\mathop{\mathrm{Top}}\nolimits}
\def \ker {\mathop{\mathrm{Ker}}\nolimits}
\def \coker {\mathop{\mathrm{Coker}}\nolimits}
\def \gal {{\mathop{\mathrm{Gal}}\nolimits}}
\def \Nr {{\mathop{\mathrm{Nr}}\nolimits}}
\def \rn {{\mathop{\mathrm{rn}}\nolimits}}
\def \tr {{\mathop{\mathrm{Tr~}}\nolimits}}
\def \Sp {{\mathop{\mathrm{Sp}}\nolimits}}
\def \st {{\mathop{\mathrm{St}}\nolimits}}
\def \sp{{\mathop{\mathrm{Sp}}\nolimits}}
\def \perv{\mathop{\mathrm{Perv}}\nolimits}
\def \tor {{\mathop{\mathrm{Tor}}\nolimits}}
\def \nrd {{\mathop{\mathrm{Nrd}}\nolimits}}
\def \nilp {{\mathop{\mathrm{Nilp}}\nolimits}}
\def \obj {{\mathop{\mathrm{Obj}}\nolimits}}
\def \cl {{\mathop{\mathrm{cl}}\nolimits}}
\def \gr {{\mathop{\mathrm{gr}}\nolimits}}
\def \grr {{\mathop{\mathrm{grr}}\nolimits}}
\def \coim {{\mathop{\mathrm{Coim}}\nolimits}}
\def \can {{\mathop{\mathrm{can}}\nolimits}}
\def \Spl {{\mathop{\mathrm{Spl}}\nolimits}}

\def \rem{{\noindent\textit{Remarque:~}}}
\def \ext {{\mathop{\mathrm{Ext}}\nolimits}}
\def \End {{\mathop{\mathrm{End}}\nolimits}}

\def\semi{\mathrel{>\!\!\!\triangleleft}}
\let \DS=\displaystyle

\setcounter{secnumdepth}{3} \setcounter{tocdepth}{3}

\def \Fil{\mathop{\mathrm{Fil}}\nolimits}
\def \coFil{\mathop{\mathrm{coFil}}\nolimits}
\def \Fill{\mathop{\mathrm{Fill}}\nolimits}
\def \CoFill{\mathop{\mathrm{CoFill}}\nolimits}
\def\SF{{\mathfrak S}}
\def\PF{{\mathfrak P}}
\def \EFil{\mathop{\mathrm{EFil}}\nolimits}
\def \ECoFil{\mathop{\mathrm{ECoFil}}\nolimits}
\def \EFill{\mathop{\mathrm{EFill}}\nolimits}
\def \FP{\mathop{\mathrm{FP}}\nolimits}

\let \longto=\longrightarrow
\let \oo=\infty

\let \d=\delta
\let \k=\kappa

\newcommand{\marque}{\addtocounter{smfthm}{1}
{\smallskip \noindent \textit{\thesmfthm}~---~}}

\renewcommand\atop[2]{\ensuremath{\genfrac..{0pt}{1}{#1}{#2}}}

\title[Torsion dans la cohomologie de certaines variÈtÈs de Shimura compactes]{Sur la torsion dans la cohomologie des variÈtÈs de Shimura 
de Kottwitz-Harris-Taylor}

\alttitle{Torsion in the cohomology of Kottwitz-Harris-Taylor Shimura varieties}

\author{Pascal Boyer}

\address{UniversitÈ Paris 13, Sorbonne Paris CitÈ \\
LAGA, CNRS, UMR 7539\\ 
F-93430, Villetaneuse (France) \\
PerCoLaTor: ANR-14-CE25}


\email{boyer@math.univ-paris13.fr}




\keywords{classes de cohomologie de torsion, variÈtÈ de Shimura, faisceau pervers, reprÈsentation automorphe}


\altkeywords{torsion cohomology classes, Shimura variety, perverse sheaf, automorphic representation}

\subjclass{11G18, 11G10, 14G35, 14G22, 11F70, 11F80}

\frontmatter

\begin{abstract}
Lorsque le niveau en $l$ d'une variÈtÈ de Shimura de Kottwitz-Harris-Taylor n'est pas maximal, 
sa cohomologie ‡ coefficients dans un $\overline \Zm_l$-systËme local n'est en gÈnÈral pas libre.
Afin d'obtenir des ÈnoncÈs d'annulation de la torsion, on localise en un idÈal maximal $\mathfrak m$
de l'algËbre de Hecke. 
Nous prouvons alors un ÈnoncÈ d'annulation de la torsion de ces localisÈs, 
reposant soit sur $\mathfrak m$ directement, soit sur la reprÈsentation galoisienne  $\overline \rho_{\mathfrak m}$
qui lui est associÈe. En ce qui concerne la torsion, dans un cadre
bien moins gÈnÈral que \cite{scholze-cara}, nous obtenons de mÍme que la torsion ne fournit
pas de nouveaux systËmes de paramËtres de Satake, en prouvant que toute classe de torsion se relËve
dans la partie libre de la cohomologie d'une variÈtÈ d'Igusa.

\end{abstract}

\begin{altabstract}
When the level at $l$ of a Shimura variety of Kottwitz-Harris-Taylor is not maximal, its cohomology with
coefficients in a $\overline \Zm_l$-local system isn't in general torsion free. In order to prove torsion 
freeness results of the cohomology, we localize at a maximal ideal $\mathfrak m$ of the Hecke algebra. 
We then prove a result of torsion freeness resting either on $\mathfrak m$ itself or
on the Galois representation $\overline \rho_{\mathfrak m}$ associated to it.
Concerning the torsion, in a rather restricted case than \cite{scholze-cara}, we prove that the torsion
doesn't give new Satake parameters systems by showing that each torsion cohomology class can be raised in the free
part of the cohomology of a Igusa variety.
\end{altabstract}

\subjclass{11F70, 11F80, 11F85, 11G18, 20C08}

\maketitle

\pagestyle{headings} \pagenumbering{arabic}

\section*{Introduction}
\renewcommand{\theequation}{\arabic{equation}}
\backmatter

Dans \cite{lan-suh}, K.-W. Lan et J. Suh prouvent un rÈsultat trËs gÈnÈral sur l'absence de torsion dans la 
cohomologie d'une variÈtÈ de Shimura PEL compacte ‡ valeur dans un $\overline \Zm_l$-systËme local. 
Pour obtenir un rÈsultat aussi gÈnÈral, les donnÈes doivent vÈrifier un certain nombre d'hypothËses 
\begin{itemize}
\item sur le systËme local qui est supposÈ trËs rÈgulier au sens de la dÈfinition 7.18 de \cite{lan-suh},

\item sur $l$ qui doit Ítre bon au sens de la dÈfinition 2.3 de \cite{lan-suh} et donc suffisamment grand
relativement au poids du systËme local considÈrÈ et

\item sur le niveau en $l$ supposÈ maximal, i.e. la variÈtÈ de Shimura est non ramifiÈe en $l$.
\end{itemize}
Ces hypothËses sont loin d'Ítre superflues comme pourra le noter le lecteur en considÈrant, par exemple,
un niveau qui est un pro-$l$-sous-groupe d'Iwahori en $l$,
et un systËme local rÈgulier $V_{\overline \Zm_l,\xi}$ tel que $V_{\overline \Fm_l,\xi}$ 
possËde des vecteurs invariants par le sous-groupe des matrices unipotentes triangulaires supÈrieures.
Alors
\begin{itemize}
\item d'une part la cohomologie de $V_{\overline \Zm_l,\xi} \otimes_{\overline \Zm_l} \overline \Qm_l$ 
est concentrÈe en degrÈ mÈdian et donc son $H^0$ est nul;

\item d'autre part 
le $H^0$ de $V_{\overline \Zm_l,\xi} \otimes_{\overline \Zm_l} \overline \Fm_l$ correspond 
aux vecteurs invariants sous le pro-$l$-sous groupe d'Iwahori en $l$ qui est donc non nul.
\end{itemize}
De ces deux faits, on en dÈduit que la torsion du $H^1$ est non nulle. En ce qui concerne le systËme local 
trivial, considÈrons la suite exacte
$$0 \rightarrow H^1 \Bigl ( G,H^0(X,\overline \Fm_l) \Bigr ) \longrightarrow H^1(Y,\overline \Fm_l)
\longrightarrow H^1(X,\overline \Fm_l)^G$$
o˘ $X \rightarrow Y$ est un revÍtement galoisien de groupe de Galois $G$, que l'on applique dans le cas o˘
\begin{itemize}
\item $X$ est une variÈtÈ de Shimura gÈomÈtriquement connexe de sorte que 
$H^0(X,\overline \Fm_l) \simeq \overline \Fm_l$,

\item $G$ est de la forme $(\Zm/l\Zm)^e$ et

\item $H^1(Y,\overline \Qm_l)$ est nul.
\end{itemize}
Ainsi comme $H^1 \bigl ( (\Zm/l\Zm)^e,\overline \Fm_l \bigr )$ est non nul,
on en dÈduit que la torsion de $H^2(Y,\overline \Zm_l)$ est non 
nulle. L'exemple le plus simple pour obtenir ces conditions consiste ‡ prendre une variÈtÈ de Shimura
de Kottwitz-Harris-Taylor pour $U(2,1)$ et deux niveaux intermÈdiaires pour que $G$ soit de la forme
voulue, cf. \cite{suh-pluri} thÈorËme 3.4.

¿ la vue de ces exemples, il semble clair que si on veut une annulation de la torsion lorsque le niveau en $l$
augmente, il est raisonnable de localiser la cohomologie en un idÈal maximal $\mathfrak m$ de l'algËbre de Hecke 
agissant sur la cohomologie. D'aprËs \cite{scholze-torsion}, est associÈe ‡ tel $\mathfrak m$ une
$\overline \Fm_l$-reprÈsentation galoisienne $\overline \rho_{\mathfrak m}$ et on cherche des conditions 
sur $\mathfrak m$ ou sur $\overline \rho_{\mathfrak m}$, pour que la localisation de la cohomologie 
en $\mathfrak m$ soit sans torsion.
Des rÈsultats dans ce sens sont obtenus dans \cite{emerton-gee}, via la thÈorie de Hodge $l$-adique
de la fibre spÈciale en $l$ de la variÈtÈ de Shimura dans le cas o˘ celle-ci est de Kottwitz-Harris-Taylor.

Dans ce travail on s'intÈresse ‡ la mÍme question mais ‡ partir de l'Ètude de la cohomologie de 
la fibre spÈciale en une place au dessus de $p \neq l$ en utilisant les calculs explicites 
de \cite{boyer-compositio}. On montre alors deux cas d'annulation de la torsion dans la cohomologie
d'une variÈtÈ de Shimura de Kottwitz-Harris-Taylor $X_I$ de niveau $I$ et de dimension relative $d-1$,
‡ coefficients dans un
systËme local $V_{\xi,\overline \Zm_l}$ associÈ ‡ une reprÈsentation irrÈductible algÈbrique $\xi$,
selon que la condition porte, thÈorËme \ref{theo1}, sur les paramËtres de Satake modulo $l$, i.e. sur 
$\mathfrak m$ directement, ou, thÈorËme \ref{theo2}, sur la reprÈsentation galoisienne 
$\overline \rho_{\mathfrak m}$ associÈe.

\medskip

\noindent \textbf{ThÈorËme A} 
\textit{Supposons qu'il existe une place $v$ non ramifiÈe pour les donnÈes, 
cf. la notation \ref{nota-spl2}, telle que le multi-ensemble des paramËtres de Satake modulo $l$ en 
$v$ associÈ ‡ $\mathfrak m$ ne contient aucun sous-multi-ensemble de la forme $\{ \alpha,q_v\alpha \}$ 
o˘ $q_v$ est le cardinal du corps rÈsiduel en $v$. Alors pour tout $i$, les localisÈs 
$H^i(X_{I,\bar \eta},V_{\xi,\overline \Zm_l})_{\mathfrak m}$ sont sans torsion.}

\medskip

\noindent \textbf{ThÈorËme B} 
\textit{Supposons qu'il existe un sous-corps $k \subset \overline \Fm_l$ tel que
$SL_n(k) \subset \overline \rho_{\mathfrak m}(G_F) \subset \overline \Fm_l^\times GL_n(k)$.
Alors si $l \geq d+2$, les localisÈs 
$H^i(X_{I,\bar \eta},V_{\xi,\overline \Zm_l})_{\mathfrak m}$ sont sans torsion pour tout $i$.}

\medskip

\rem le thÈorËme B est l'une des formes que peut prendre le thÈorËme \ref{theo2} qui utilise une 
hypothËse tirÈe
de \cite{emerton-gee} et qui est aussi vÈrifiÈe si $\overline \rho_{\mathfrak m}$ est induit d'un caractËre du groupe
de Galois d'une extension $K/F$ galoisienne cyclique.

En ce qui concerne la torsion, nous obtenons deux types de contraintes pour son existence:
\begin{itemize}
\item son apparition dans le localisÈ en $\mathfrak m$ du $i$-Ëme groupe de cohomologie, implique 
des restrictions sur le multi-ensemble des paramËtres de Satake modulo $l$ en toute place non ramifiÈe,
cf. la proposition \ref{prop1}.

\item Si on considËre le plus petit groupe de cohomologie o˘ le localisÈ en $\mathfrak m$ n'est pas
libre, alors pour une sous-reprÈsentation galoisienne irrÈductible de cette torsion, l'action du Frobenius
en toute place non ramifiÈe n'admet pas beaucoup de valeurs propres distinctes, cf. la 
proposition \ref{prop2}. En particulier si on cherche des reprÈsentations galoisiennes irrÈductibles sans
valeur propre multiple pour l'action des Frobenius, leur dimension est majorÈe explicitement, cf. le 
corollaire \ref{coro-dimension}, selon le principe que plus on s'Èloigne du degrÈ mÈdian, plus la majoration 
est contraignante.

\item 
Dans un cadre beaucoup plus simple que celui de \cite{scholze-cara}, on montre que, pour tout
$\mathfrak m$ apparaissant dans la cohomologie, le systËme
de paramËtres de Satake associÈ est la rÈduction modulo $l$ d'un systËme apparaissant dans la partie 
libre de la cohomologie d'une variÈtÈ d'Igusa sur une strate de Newton. Dans l'esprit de \cite{scholze-torsion},
on peut alors associer, d'aprËs \cite{h-t}, ‡ un tel systËme, une 
reprÈsentation galoisienne dont la rÈduction modulo $l$ est telle qu'aux
places non ramifiÈes, les frobenius annulent le polynÙme de Hecke associÈ ‡ $\mathfrak m$, cf.
le thÈorËme \ref{theo-3}. Cependant on notera bien que comme dans \cite{scholze-cara} et contrairement 
‡ \cite{scholze-torsion}, on n'obtient rien de nouveau.

\end{itemize}

Pour l'essentiel, les arguments reposent sur les calculs explicites de \cite{boyer-compositio} 
des groupes de cohomologie des strates de Newton; ces rÈsultats sont rappelÈs 
au \S \ref{para-rappel-coho}.

Les rÈsultats de ce papier et notamment le thÈorËme \ref{theo1} devraient, ‡ l'instar du corollaire 3.5.1
de \cite{emerton-gee}, permettre de prouver la partie poids de Serre de la conjecture de \cite{herzig}
pour $U(d-1,1)$, tout comme \cite{emerton-gee} le fait pour $U(2,1)$. Il faudrait pour ce faire gÈnÈraliser
les rÈsultats de \cite{egh}.

\medskip

L'auteur remercie vivement BenoÓt Stroh pour nos nombreuses discussions et en particulier pour
les exemples de classes de cohomologie de torsion ÈvoquÈes dans l'introduction. 
Enfin je remercie le rapporteur anonyme qui a permis de corriger coquilles, mal-dits et erreurs de la version
initialement soumise.

\tableofcontents

\mainmatter

\renewcommand{\theequation}{\arabic{section}.\arabic{smfthm}}

\section{GÈomÈtrie de quelques variÈtÈs de Shimura unitaires}
\label{para-shimura}

Dans la suite $l$ et $p$ dÈsigneront deux nombres premiers distincts. 
Soit $F=F^+ E$ un corps CM avec $E/\Qm$ quadratique imaginaire, dont on fixe 
un plongement rÈel $\tau:F^+ \hookrightarrow \Rm$. Pour $v$ une place de $F$, on notera 
\begin{itemize}
\item $F_v$ le complÈtÈ du localisÈ de $F$ en $v$,

\item $\OC_v$ l'anneau des entiers de $F_v$,

\item $\varpi_v$ une uniformisante et

\item $q_v$ le cardinal du corps rÈsiduel $\kappa(v)=\OC_v/(\varpi_v)$.
\end{itemize}

\begin{hypo} On supposera dans la suite que $l$ est non ramifiÈ dans $E$.
\end{hypo}

Soit $B$ une algËbre ‡ 
division centrale sur $F$ de dimension $d^2$ telle qu'en toute place $x$ de $F$,
$B_x$ est soit dÈcomposÈe soit une algËbre ‡ division et on suppose $B$ 
munie d'une involution de
seconde espËce $*$ telle que $*_{|F}$ est la conjugaison complexe $c$. Pour
$\beta \in B^{*=-1}$, on note $\sharp_\beta$ l'involution $x \mapsto x^{\sharp_\beta}=\beta x^*
\beta^{-1}$ et $G/\Qm$ le groupe de similitudes, notÈ $G_\tau$ dans \cite{h-t}, dÈfini
pour toute $\Qm$-algËbre $R$ par 
$$
G(R)  \simeq   \{ (\lambda,g) \in R^\times \times (B^{op} \otimes_\Qm R)^\times  \hbox{ tel que } 
gg^{\sharp_\beta}=\lambda \}
$$
avec $B^{op}=B \otimes_{F,c} F$. 
Si $x$ est une place de $\Qm$ dÈcomposÈe $x=yy^c$ dans $E$ alors 
$$G(\Qm_x) \simeq (B_y^{op})^\times \times \Qm_x^\times \simeq \Qm_x^\times \times
\prod_{z_i} (B_{z_i}^{op})^\times,$$
o˘, en identifiant les places de $F^+$ au dessus de $x$ avec les places de $F$ au dessus de $y$,
$x=\prod_i z_i$ dans $F^+$.

\begin{nota} \label{nota-GFv}
Pour $x$ une place de $\Qm$ dÈcomposÈe dans $E$ et $z$ une place de $F^+$ au dessus
de $x$, on notera $G(F_z)$ le facteur $(B_z^{op})^\times$ de $G(\Qm_x)$ et
$G(\Am^z)$ pour $G(\Am)$ auquel on Ùte le facteur $(B_z^{op})^\times$.
De mÍme pour $T$ un ensemble de places de $\Qm$ et $x \in T$, on notera 
$T-\{ z \}$ pour dÈsigner la rÈunion des places de $T$ distinctes de $x$ avec les
places de $F$, autres que $z$, au dessus de $x$.
\end{nota}

Dans \cite{h-t}, les auteurs justifient l'existence d'un $G$ comme ci-dessus tel qu'en outre:
\begin{itemize}
\item si $x$ est une place de $\Qm$ qui n'est pas dÈcomposÈe dans $E$ alors
$G(\Qm_x)$ est quasi-dÈployÈ;

\item les invariants de $G(\Rm)$ sont $(1,d-1)$ pour le plongement $\tau$ et $(0,d)$ pour les
autres. 
\end{itemize}

\emph{On fixe} ‡ prÈsent un nombre premier $p=uu^c$ dÈcomposÈ dans $E$ tel qu'il existe une 
place $v$ de $F$ au dessus de $u$ avec 
$$(B_v^{op})^\times \simeq GL_d(F_v).$$
On note $v=v_1, v_2,\cdots, v_r$ les places de $F$ au dessus de $u$.
Pour tout $\underline m=(m_1,\cdots,m_r) \in \Zm_{\geq 0}^r$, on pose
$$\underline m^v=(0,m_2,\cdots,m_r) 
,$$
et pour tout sous-groupe compact $U^p$ de $G(\Am^{\oo,p})$, on note
$$U_v(\underline m)=U^p \times \Zm_p^\times \times \prod_{i=1}^r 
\ker \bigl ( \OC_{B_{v_i}}^\times \longto (\OC_{B_{v_i}}/\PC_{v_i}^{m_i})^\times \bigr ),$$
ainsi que
$$U^v(\underline m)=U_v(\underline m^v).$$†

\begin{nota}
On note $\Spl$ l'ensemble des places $v$ de $F$ telles que 
$p_v:=v_{|\Qm}$ est dÈcomposÈ dans $E$ et distinct de $l$, avec
$$G(\Qm_{p_v}) \simeq \Qm_{p_v}^\times \times GL_d(F_v) \times \prod_{i=2}^r (B_{v_i}^{op})^\times,$$
o˘ $p_v=v.\prod_{i=2}^r v_i$ dans $F^+$.
\end{nota}

\begin{nota}  \label{nota-spl}
Pour $v \in \Spl$, on note $\IC_v$
l'ensemble des sous-groupes compacts ouverts \og assez petits \fg{}\footnote{tel qu'il existe 
une place $x$ pour laquelle la projection de $U_v(\underline m)$ sur $G(\Qm_x)$ ne contienne aucun 
ÈlÈment d'ordre fini autre que l'identitÈ, cf. \cite{h-t} bas de la page 90}  de 
$G(\Am^\oo)$, de la forme $U_v(\underline m)$. 
Pour $I=U_v(\underline m) \in \IC_v$, on note 
\begin{itemize}
\item $I^v =U^v(\underline m)$, 

\item $n(I):=m_1$, et

\item $\Spl(I)$ l'ensemble des places $w \in \Spl$ telles que $I$ est maximal en $w$.
\end{itemize}
\end{nota}

\begin{defi}
Pour $U_v(\underline m)$ \og assez petit \fg{}, soit $X_{U_v(\underline m)}/ \spec \OC_v$ \og 
la variÈtÈ de Shimura dite de Kottwitz-Harris-Taylor associÈe ‡ $G$\fg{} construite dans \cite{h-t}.
\end{defi}

\rem $X_{U_v(\underline m)}$ est un schÈma projectif sur $\spec \OC_v$ tel
que quand $U^p$ et $\underline m$ varient, les $X_{U_v(\underline m)}$ forment un systËme projectif dont les morphismes de transition sont finis et plats. Quand $m_1=m_1'$ alors $X_{U_v(\underline m)} 
\longrightarrow X_{U_v(\underline{m'})}$ est Ètale. Par ailleurs le systËme projectif 
$$\bigl ( X_{U_v(\underline m)} \bigr )_{U^p,\underline m}$$
est naturellement muni d'une action de $G(\Am^\oo) \times \Zm$  telle que l'action d'un ÈlÈment
$w_v$ du groupe de Weil $W_v$ de $F_v$ est donnÈe par celle de $-\deg (w_v) \in \Zm$,
o˘ $\deg=\val \circ \art^{-1}$ o˘ $\art^{-1}:W_v^{ab} \simeq F_v^\times$ est
l'isomorphisme d'Artin qui envoie les Frobenius gÈomÈtriques sur les uniformisantes.

Notons $\AC$ la variÈtÈ abÈlienne universelle sur $X_{I^v,\bar s_v}$ puis 
$\GC:= \diag(1,0,\cdots,0).\AC[v^\oo]$ le groupe de Barsotti-Tate de dimension $1$ associÈ.

\begin{notas} (cf. \cite{boyer-invent2} \S 1.3)
Pour $I \in \IC_v$, on note:
\begin{itemize}
\item $X_{I,s_v}$ la fibre spÈciale de $X_I$ en $v$ et $X_{I,\bar s_v}:=X_{I,s_v} 
\times \spec \bar \Fm_p$ la fibre spÈciale gÈomÈtrique.

\item Pour tout $1 \leq h \leq d$, $X_{I,\bar s_v}^{\geq h}$ (resp. $X_{I,\bar s_v}^{=h}$)
dÈsigne la strate fermÈe (resp. ouverte) de Newton de hauteur $h$, i.e. le sous-schÈma dont 
la partie connexe du groupe de Barsotti-Tate en chacun de ses points gÈomÈtriques
est de rang $\geq h$ (resp. Ègal ‡ $h$).
\end{itemize}
\end{notas}


\begin{notas} Pour tout $1 \leq h <d$, nous utiliserons les notations suivantes:
$$i_{h+1}:X^{\geq h+1}_{I,\bar s_v} \hookrightarrow X^{\geq h}_{I,\bar s_v}, \quad
j^{\geq h}: X^{=h}_{I,\bar s_v} \hookrightarrow X^{\geq h}_{I,\bar s_v}.$$
\end{notas}

\rem par la suite, lors de l'introduction de nouvelles notations relativement ‡ des inclusions
gÈomÈtriques, nous conserverons la convention qu'une lettre $i$ (resp. $j$) dÈsigne une 
inclusion fermÈe (resp. ouverte).

Rappelons que $\LC:= (\lie \GC)^\vee$ est un fibrÈ en droite ample sur $X_{I^v,\bar s_v}$.

\begin{theo} (cf. \cite{ito2}) \label{theo-ito}
Pour tout $1 \leq h \leq d$, il existe un invariant de Hasse gÈnÈralisÈ
$$H_h \in H^0(X^{\geq h}_{I^v,\bar s_v},\LC^{(p^h-1)})$$
qui est inversible sur $X^{=h}_{I^v,\bar s_v}$ et possËde un zÈro simple sur $X^{\geq h+1}_{I^v,\bar s_v}$.
\end{theo}

\rem en particulier $X^{=h}_{I^v,\bar s_v}$ est affine et rÈguliËre. 

Pour tout $1 \leq h< d$, les strates
$X_{I,\bar s_v}^{=h}$ sont gÈomÈtriquement induites sous l'action du parabolique $P_{h,d}(F_v)$ au 
sens o˘ il existe un sous-schÈma fermÈ $X_{I,\bar s_v,1}^{=h}$ muni d'une action par correspondances
de $G(\Am^{\oo,v}) \times GL_{d-h}(F_v) \times \Zm$ tel que:
$$X_{I^,\bar s_v}^{=h} \simeq X_{I,\bar s_v,1}^{=h} \times_{P_{h,d}(\OC_v/(\varpi_v^{n(I)}))} 
GL_d(\OC_v/(\varpi_v^{n(I)})).$$

\begin{nota}
On note $X_{I,\bar s_v,1}^{\geq h}$ l'adhÈrence de $X_{I,\bar s_v,1}^{=h}$ dans 
$X_{I,\bar s_v}^{\geq h}$ et 
$$j^{\geq h}_1: X_{I,\bar s_v,1}^{=h} \hookrightarrow X_{I,\bar s_v,1}^{\geq h}.$$
\end{nota}

\section{ReprÈsentations automorphes cohomologiques}

Avant de parler de reprÈsentations automorphes, rappelons quelques notations
sur les reprÈsentations admissibles de $GL_n$ sur un corps local $K$.
Pour $P=MN$ un parabolique standard de $GL_n$ de LÈvi $M$ et de radical unipotent $N$,
on note $\delta_P:P(K) \rightarrow \overline \Qm_l^\times$ l'application dÈfinie par
$$\delta_P(h)=|\det (\ad(h)_{|\lie N})|^{-1}.$$
Pour $(\pi_1,V_1)$ et $(\pi_2,V_2)$ des reprÈsentations de respectivement $GL_{n_1}(K)$ 
et $GL_{n_2}(K)$, et $P_{n_1,n_2}$ le parabolique standard de $GL_{n_1+n_2}$ de Levi 
$M=GL_{n_1} \times GL_{n_2}$ et de radical unipotent $N$,
$$\pi_1 \times \pi_2$$
dÈsigne l'induite parabolique normalisÈe de $P_{n_1,n_2}(K)$ ‡ $GL_{n_1+n_2}(K)$ de 
$\pi_1 \otimes \pi_2$ c'est ‡ dire
l'espace des fonctions $f:GL_{n_1+n_2}(K) \rightarrow V_1 \otimes V_2$ telles que
$$f(nmg)=\delta_{P_{n_1,n_2}}^{-1/2}(m) (\pi_1 \otimes \pi_2)(m) \Bigl ( f(g) \Bigr ),
\quad \forall n \in N, ~\forall m \in M, ~ \forall g \in GL_{n_1+n_2}(K).$$
Rappelons qu'une reprÈsentation $\pi$ de $GL_n(K)$ est dite \textit{cuspidale} si elle n'est pas 
un sous-quotient d'une induite parabolique propre.

\begin{nota}
Soient $g$ un diviseur de $d=sg$ et $\pi$ une reprÈsentation cuspidale
irrÈductible de $GL_g(K)$. L'unique quotient (resp. sous-reprÈsentation) irrÈductible de
$\pi\{ \frac{1-s}{2} \} \times \pi\{\frac{3-s}{2} \} \times \cdots \times \pi\{ \frac{s-1}{2} \}$
est notÈ $\st_s(\pi)$ (resp. $\speh_s(\pi)$).
\end{nota}

\rem du point de vue galoisien, via la correspondance de Langlands locale, la reprÈsentation
$\speh_s(\pi)$ correspond ‡ la somme directe $\sigma(\frac{1-s}{2}) \oplus \cdots \oplus
\sigma(\frac{s-1}{2})$ o˘ $\sigma$ correspond ‡ $\pi$. Plus gÈnÈralement pour
$\pi$ une reprÈsentation irrÈductible quelconque de $GL_g(K)$ associÈe ‡ $\sigma$
par la correspondance de Langlands locale, on notera $\speh_s(\pi)$ la reprÈsentation
de $GL_{sg}(K)$ associÈe, par la correspondance de Langlands locale, ‡ 
$\sigma(\frac{1-s}{2}) \oplus \cdots \oplus \sigma(\frac{s-1}{2})$.

Rappelons, cf. \cite{h-t} p.97, la paramÈtrisation des reprÈsentations algÈbriques irrÈductibles
de $G$ sur $\bar \Qm_l$. Fixons pour ce faire un plongement $\sigma_0:E \hookrightarrow
\bar{\Qm}_l$ et notons $\Phi$ l'ensemble des plongements $\sigma:F \hookrightarrow
\bar \Qm_l$ dont la restriction ‡ $E$ est $\sigma_0$. 

\medskip

\noindent \textbf{Fait}: 
Il existe une bijection explicite entre les reprÈsentations algÈbriques irrÈductibles $\xi$ de $G$ 
sur $\bar \Qm_l$ et les $(d+1)$-uplets
$\bigl ( a_0, (\overrightarrow{a_\sigma})_{\sigma \in \Phi} \bigr )$
o˘ $a_0 \in \Zm$ et pour tout $\sigma \in \Phi$, on a $\overrightarrow{a_\sigma}=
(a_{\sigma,1} \leq \cdots \leq a_{\sigma,d} )$.

\medskip

Soit $K \subset \bar \Qm_l$, une extension finie de $\Qm_l$ telle que
la reprÈsentation $\iota^{-1} \circ \xi$ de plus haut poids
$\bigl ( a_0, (\overrightarrow{a_\sigma})_{\sigma \in \Phi} \bigr )$,
soit dÈfinie sur $K$; notons $W_{\xi,K}$ l'espace de cette reprÈsentation et $W_{\xi,\OC}$
un rÈseau stable sous l'action du sous-groupe compact maximal $G(\Zm_l)$, 
o˘ $\OC$ dÈsigne l'anneau des entiers de $K$.

\rem si on suppose que $\xi$ est $l$-petit, i.e. que pour tout $\sigma \in \Phi$ et pour tout
$1 \leq i < j \leq n$, on a $0 \leq a_{\tau,j}-a_{\tau,i} < l$,
alors un tel rÈseau stable est unique ‡ homothÈtie prËs.

Notons $\lambda$ une uniformisante de $\OC$ et soit
pour $n \geq 1$, un sous-groupe distinguÈ $I_n \in \IC_v$ de $I \in \IC_v$,
compact ouvert agissant trivialement sur $W_{\xi,\OC/\lambda^n}:=W_{\xi,\OC} 
\otimes_{\OC} \OC/\lambda^n$. On note alors $V_{\xi,\OC/\lambda^n}$ le faisceau sur
$X_{I}$ dont les sections sur un ouvert Ètale $T \longrightarrow X_{I}$ sont les fonctions
$$f:\pi_0 \big ( X_{I_n} \times_{X_I} T \bigr ) \longrightarrow W_{\xi,\OC/\lambda^n}$$
telles que pour tout $k \in I$ et $C \in \pi_0 \big ( X_{I_n} \times_{X_I} T \bigr )$, on a
la relation $f(Ck)=k^{-1} f(C)$.

\begin{nota} On pose alors
$$V_{\xi,\OC}=\lim_{\atop{\leftarrow}{n}} V_{\xi,\OC/\lambda^n} \hbox{ et }
V_{\xi,K}=V_{\xi,\OC} \otimes_{\OC} K.$$
On utilisera aussi la notation $V_{\xi,\bar \Zm_l}$ et $V_{\xi,\bar \Qm_l}$ pour les versions
sur $\bar \Zm_l$ et $\bar \Qm_l$ respectivement.
\end{nota}

\rem rappelons que la reprÈsentation $\xi$ est dite \emph{rÈguliËre} si son paramËtre
$\bigl ( a_0, (\overrightarrow{a_\sigma})_{\sigma \in \Phi} \bigr )$ est tel que pour 
tout $\sigma \in \Phi$, on a $a_{\sigma,1} < \cdots < a_{\sigma,d}$.

On fixe ‡ prÈsent une  $\Cm$-reprÈsentation irrÈductible algÈbrique de dimension finie $\xi$ de $G$. 

\begin{defi} \label{defi-automorphe}
Une $\Cm$-reprÈsentation irrÈductible $\Pi_{\oo}$ de $G(\Am_{\oo})$ est dite $\xi$-cohomologique s'il
existe un entier $i$ tel que
$$H^i((\lie G(\Rm)) \otimes_\Rm \Cm,U_\tau,\Pi_\oo \otimes \xi^\vee) \neq (0)$$
o˘ $U_\tau$ est un sous-groupe compact modulo le centre de $G(\Rm)$, maximal, cf. \cite{h-t} p.92. 
On notera $d_\xi^i(\Pi_\oo)$ la dimension de ce groupe de cohomologie.
Une $\bar \Qm_{l}$-reprÈsentation irrÈductible $\Pi^{\oo}$ de $G(\Am^{\oo})$ sera dit automorphe 
$\xi$-cohomologique s'il existe une $\Cm$-reprÈsentation $\xi$-cohomologique $\Pi_\oo$ de $G(\Am_\oo)$ 
telle que
$\iota_{l}\Bigl ( \Pi^{\oo} \Bigr ) \otimes \Pi_{\oo}$ est une $\Cm$-reprÈsentation automorphe de $G(\Am)$.
\end{defi}

\begin{nota} Pour $\Pi$ une reprÈsentation irrÈductible admissible de $G(\Am)$, on note
$m(\Pi)$ sa multiplicitÈ dans l'espace des formes automorphes.
\end{nota}

Pour $\Pi$ une reprÈsentation automorphe irrÈductible admissible cohomologique de $G(\Am)$,
rappelons, cf. par exemple le lemme 3.2 de \cite{boyer-aif}, que pour 
$x$ une place de $\Qm$ dÈcomposÈe $x=yy^c$ dans $E$ et $z$
une place de $F$ au dessus de $y$ telle que, avec la notation \ref{nota-GFv},
$G(F_z) := (B_z^{op})^\times \simeq GL_d(F_z)$,
la composante locale $\Pi_z$, au sens de \ref{nota-GFv},
est de la forme $\speh_s(\pi_z)$ pour $\pi_z$ une reprÈsentation 
irrÈductible non dÈgÈnÈrÈe et $s$ un entier $\geq 1$ qui ne dÈpend
que de $\Pi$ et non de la place $z$ comme ci-dessus. 

\begin{defi} \label{defi-parametre}
L'entier $s$ ci-avant est appelÈ la profondeur de dÈgÈnÈrescence de $\Pi$.
\end{defi}

\rem on rappelle qu'un telle reprÈsentation $\Pi$ est dite tempÈrÈe si sa profondeur de
dÈgÈnÈrescence $s$ est Ègale ‡ $1$.

\begin{nota}
Les $d^i_\xi(\Pi_\oo)$ sont nuls si $|d-1-i| \geq s$ ou si $d-1-i \equiv s \mod 2$.
Sinon ils sont tous Ègaux, on note $d_\xi(\Pi_\oo)$ la valeur commune non nulle des $d^i_\xi(\Pi_\oo)$.
\end{nota}

\section{Rappels sur la $\overline \Qm_l$-cohomologie d'aprËs \cite{boyer-compositio}}
\label{para-rappel-coho}

Dans ce paragraphe $v$ dÈsigne une place de $F$ telle que $p_v:=v_{|\Qm}$ est dÈcomposÈ dans $E$ 
distinct de $l$ et  $G(\Qm_{p_v}) \simeq \Qm_{p_v}^\times \times GL_d(F_v) \times 
\prod_{i=2}^r (B_{v_i}^{op})^\times$, o˘ $p_v=v.\prod_{i=2}^r v_i$ dans $F^+$.

\begin{nota} \label{nota-hixi}

Pour $1 \leq h \leq d$, on note $\IC_v(h)$ l'ensemble des sous-groupes compacts ouverts de la
forme 
$$U_v(\underline m,h):= 
U_v(\underline m^v) \times \left ( \begin{array}{cc} I_h & 0 \\ 0 & K_v(m_1) \end{array} \right ),$$
o˘ $K_v(m_1)=\ker \bigl ( GL_{d-h}(\OC_v) \longrightarrow GL_{d-h}(\OC_v/ (\varpi_v^{m_1})) \bigr )$.
La notation $[H^i(h,\xi)]$ (resp. $[H^i_!(h,\xi)]$) dÈsignera l'image de 
$$\lim_{\atop{\longrightarrow}{I \in \IC_v(h)}} H^i(X_{I,\bar s_v,1}^{\geq h}, V_{\xi,\bar \Qm_l}[d-h]) 
\qquad \hbox{resp. }
\lim_{\atop{\longrightarrow}{I \in \IC_v(h)}} H^i(X_{I,\bar s_v,1}^{\geq h}, j^{\geq h}_{1,!} V_{\xi,\bar \Qm_l}[d-h]) 
$$ 
dans le groupe de Grothendieck $\groth(v,h)$ des reprÈsentations admissibles de 
$G(\Am^{\oo}) \times GL_{d-h}(F_v) \times \Zm$.
\end{nota}

\noindent \textit{Remarques}:
\begin{itemize}
\item[(i)] comme tous les compacts de $\IC_v$ (resp. $\IC_v(h)$) contiennent le
facteur $\Zm_{p_v}^\times$, les reprÈsentations $\Pi$ qui vont intervenir par la suite, dans les diffÈrents 
groupes de cohomologie, devront toutes vÈrifier que leur composante $\Pi_{p_v,0}$ sur 
le facteur de similitude $\Qm_{p_v}^\times$, est telle $(\Pi_{p_v,0})_{|\Zm_{p_v}^\times}=1$.

\item[(ii)] L'action de $\sigma \in W_v$ sur ces $GL_{h}(F_v) \times \Zm$-modules 
est donnÈe par celle de $-\deg \sigma \in \Zm$ composÈe avec celle de $\Pi_{p_v,0}(\art^{-1} (\sigma))$.

\item[(iii)] Par ailleurs, on munit ces espaces d'une action de $GL_{h}(F_v)$ via le morphisme 
$$\val \circ \det: GL_{h}(F_v) \longrightarrow \Zm$$
et enfin une action de $P_{h,d}(F_v)$ via son facteur de LÈvi $GL_{h}(F_v) \times GL_{d-h}(F_v)$,
i.e. en faisant agir trivialement son radical unipotent.
\end{itemize}

Pour $I_0 \in \IC_v(h)$ qui est maximal en $v$, i.e. $m_1=0$, on a
\addtocounter{smfthm}{1}
\begin{equation} \label{eq-Ih1}
H^i(X_{I_0,\bar s_v}^{\geq h},V_{\xi,\overline \Qm_l})= \Bigl (
\lim_{\atop{\rightarrow}{I \in \IC_v(h)}} 
H^i(X_{I,\bar s_v,1}^{\geq h}, V_{\xi,\bar \Qm_l}) \Bigr )^{I_0}
\end{equation}
ainsi que
\addtocounter{smfthm}{1}
\begin{equation} \label{eq-Ih2}
H^i(X_{I_0,\bar s_v}^{\geq h},j^{\geq h}_{!} V_{\xi,\overline \Qm_l})= \Bigl ( 
\lim_{\atop{\rightarrow}{I \in \IC_v(h)}} 
H^i(X_{I,\bar s_v,1}^{=h},j^{\geq h}_{1,!} V_{\xi,\bar \Qm_l}) \Bigr )^{I_0}.
\end{equation}

\begin{nota}
Pour $\Pi^{\oo,v}$ une reprÈsentation irrÈductible de $G(\Am^{\oo,v})$, on notera $\groth(h)\{ \Pi^{\oo,v} \}$ 
le sous-groupe facteur direct de $\groth(v,h)$ engendrÈ par les irrÈductibles de la forme 
$\Pi^{\oo,v} \otimes \pi_{v,et} \otimes \zeta$ o˘ $\pi_{v,et}$ (resp. $\zeta$) est une reprÈsentation 
irrÈductible quelconque de $GL_{d-h}(F_v)$ (resp. de $\Zm$). On notera alors
$$[H^i(h,\xi)]\{ \Pi^{\oo,v} \}$$
la projection de $[H^i(h,\xi)]$ sur ce facteur direct.
\end{nota}

On Ècrit 
$$[H^i(h,\xi)]\{ \Pi^{\oo,v} \}=\Pi^{\oo,v} \otimes \Bigl ( \sum_{\Psi_v,\xi} m_{\Psi_v,\zeta}(\Pi^{\oo,v}) \Psi_v 
\otimes \zeta \Bigr ),$$ 
o˘ $\Psi_v$ (resp. $\xi$) dÈcrit l'ensemble des reprÈsentations admissibles de $GL_h(F_v)$,
(resp. de $\Zm$ que l'on voit comme une reprÈsentation non ramifiÈe de $W_v$).

\begin{prop} \label{prop-preHecke}
Pour $\PC$ l'ensemble des nombres premiers de $\Zm$ et
$I=\otimes_{q \in \PC} I_q \in \IC_v$ maximal en $v$, 
dans le groupe de Grothendieck des reprÈsentations de l'algËbre
de Hecke $\otimes_{q \in \PC} \overline \Qm_l[I_q \backslash G(\Qm_q)/I_q]$,
on a avec les notations prÈcÈdentes
$$[H^{d-h+i}(X_{I,\bar s_v}^{\geq h},V_{\xi,\overline \Qm_l})]=\sum_{\Pi^{\oo,v}}
(\Pi^{\oo,v} )^{I^{\oo,v}} \otimes \Bigl (  \sum_{\Psi_v,\xi} 
m_{\Psi_v,\zeta}(\Pi^{\oo,v})  \bigl ( \speh_h \zeta  \times  \Psi_v \bigr )^{GL_d(\OC_v)} \Bigr ).$$
\end{prop}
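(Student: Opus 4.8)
The plan is to pass from the "rigidified" strata $X^{\geq h}_{I,\bar s_v,1}$, on which the cohomology groups $[H^i(h,\xi)]$ and $[H^i_!(h,\xi)]$ are defined, back to the full Newton strata $X^{\geq h}_{I_0,\bar s_v}$, and to read off the $GL_d(F_v)$-representation that appears. The starting point is the induction formula $X^{=h}_{I,\bar s_v} \simeq X^{=h}_{I,\bar s_v,1} \times_{P_{h,d}(\OC_v/(\varpi_v^{n(I)}))} GL_d(\OC_v/(\varpi_v^{n(I)}))$ recalled in \S\ref{para-shimura}, together with its closed analogue for the $X^{\geq h}$, and the identities \eqref{eq-Ih1}–\eqref{eq-Ih2} which express the cohomology of the strata for $I_0$ maximal at $v$ as the $I_0$-invariants of the limit over $\IC_v(h)$. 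Thus $H^{d-h+i}(X^{\geq h}_{I_0,\bar s_v},V_{\xi,\overline\Qm_l})$ is, as a module over the prime-to-$v$ Hecke algebra and over $GL_d(\OC_v)$, the parabolic induction from $P_{h,d}(F_v)$ to $GL_d(F_v)$ of the $GL_h(F_v)\times GL_{d-h}(F_v)$-module $[H^i(h,\xi)]$ (shifted so that the perverse normalization $[d-h]$ disappears), with the $GL_{d-h}(F_v)$-factor acting through $\val\circ\det$ and hence contributing, after taking $GL_{d-h}(\OC_v)$-invariants at level $I_0$, the Speh representation $\speh_h\zeta$ on that block; this is exactly the content of remarks (ii)–(iii) following Notation \ref{nota-hixi}.

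First I would fix $\Pi^{\oo,v}$ and work inside the direct summand $\groth(h)\{\Pi^{\oo,v}\}$, writing $[H^i(h,\xi)]\{\Pi^{\oo,v}\} = \Pi^{\oo,v}\otimes\bigl(\sum_{\Psi_v,\xi} m_{\Psi_v,\zeta}(\Pi^{\oo,v})\,\Psi_v\otimes\zeta\bigr)$ as in the paragraph preceding the statement. Next I would apply the geometric induction identity to rewrite $H^{d-h+i}(X^{\geq h}_{I_0,\bar s_v},V_{\xi,\overline\Qm_l})$ as $\Ind_{P_{h,d}(F_v)}^{GL_d(F_v)}$ of $\bigoplus_{\Psi_v,\zeta} m_{\Psi_v,\zeta}(\Pi^{\oo,v})\,\Psi_v\boxtimes(\val\circ\det\text{-twist of }\zeta)$, tensored with $\Pi^{\oo,v}$; here one uses that the $GL_{d-h}(F_v)$-action is through $\val\circ\det$, so the corresponding factor of the unnormalized induction from the Levi $GL_h\times GL_{d-h}$ is precisely $\speh_h\zeta$ sitting in the $GL_{d-h}$-block — this is the standard translation between "the $\Zm$ acting by $\val\circ\det$" and the Speh representation, and it is where the $\speh_h\zeta$ in the statement comes from. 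Then I would take invariants: $(\Ind\cdots)^{GL_d(\OC_v)}$ together with $(\Pi^{\oo,v})^{I^{\oo,v}}$, and sum over $\Pi^{\oo,v}$, which yields $\sum_{\Pi^{\oo,v}}(\Pi^{\oo,v})^{I^{\oo,v}}\otimes\bigl(\sum_{\Psi_v,\xi} m_{\Psi_v,\zeta}(\Pi^{\oo,v})(\speh_h\zeta\times\Psi_v)^{GL_d(\OC_v)}\bigr)$, exactly the asserted formula in the Grothendieck group of Hecke modules.

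The main obstacle is bookkeeping rather than a single hard step: one must check carefully that the perverse shift $[d-h]$, the degree shift from $H^i$ to $H^{d-h+i}$, and the half-integral twists hidden in the notation $\speh_h\zeta$ (and in the normalization $\pi\{\tfrac{1-s}{2}\}\times\cdots$) all match up, and that passing to $I_0$-invariants commutes with parabolic induction in the way claimed — this is legitimate because $I_0$ is maximal at $v$, so the relevant invariants are under the hyperspecial $GL_d(\OC_v)$ and Iwasawa/Frobenius reciprocity applies cleanly. A secondary point to verify is the compatibility of the $W_v$-action (through $-\deg$ composed with $\Pi_{p_v,0}$, remark (ii)) with the identification, but since the statement is only an equality in the Grothendieck group of Hecke modules and not of Galois modules, this does not affect the argument. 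Once these normalizations are pinned down, the identity is a formal consequence of the induction structure of the Newton strata recalled from \cite{boyer-compositio} and the defining relations \eqref{eq-Ih1}–\eqref{eq-Ih2}.
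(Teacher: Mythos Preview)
Your overall strategy matches the paper's own proof: invoke \eqref{eq-Ih1}, use the parabolic-induction description of the $GL_d(F_v)$-action from remarks (ii)--(iii) after Notation~\ref{nota-hixi}, and take $I_0$-invariants. The paper's proof is in fact a two-line appeal to exactly these ingredients, together with the remark (from \cite{boyer-compositio}) that the decomposition of $\varinjlim_{I} H^i(X^{\geq h}_{I,\bar s_v,1},V_{\xi,\bar\Qm_l})$ along its $\Pi^{\oo,v}$-isotypic components is semi-simple; since the statement is only asserted in the Grothendieck group and taking invariants under a compact open subgroup is exact, this last point is a convenience rather than a necessity, so your not invoking it is harmless.

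There is, however, a genuine bookkeeping error: you have systematically swapped the roles of $GL_h(F_v)$ and $GL_{d-h}(F_v)$. Reread remark (iii): it is the $GL_h(F_v)$-factor of the Levi that acts through $\val\circ\det$ (this is the side corresponding to the connected part of the Barsotti--Tate group, of height $h$), while $GL_{d-h}(F_v)$ acts genuinely on the \'etale side and carries the representation $\Psi_v$. Accordingly $\speh_h\zeta$ sits in the $GL_h$-block, not in the $GL_{d-h}$-block: it is nothing but the inflation of the $\Zm$-character $\zeta$ to the one-dimensional character $(\zeta\circ\val)\circ\det$ of $GL_h(F_v)$. With the blocks interchanged as in your write-up, the induced representation would come out as $\speh_{d-h}\zeta\times\Psi_v$ rather than $\speh_h\zeta\times\Psi_v$, which is wrong. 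Once you correct this labelling, your argument is exactly the paper's.
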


\begin{proof}
Le rÈsultat dÈcoule directement de (\ref{eq-Ih1}) avec la description de l'action de $GL_d(F_v)$,
et du fait que, d'aprËs \cite{boyer-compositio}, la dÈcomposition de
${\displaystyle \lim_{\atop{\rightarrow}{I \in \IC_v}}} H^i(X_{I,\bar s_v,1}^{\geq h}, V_{\xi,\bar \Qm_l})$
selon ses $\Pi^{\oo,v}$-composantes est semi-simple.
\end{proof}

\rem on a une ÈgalitÈ du mÍme style pour la cohomologie ‡ support compact 
$H^i(X_{I,\bar s_v}^{\geq h},j^{\geq h}_! V_{\xi,\overline \Qm_l})$.

Les rÈsultats suivants se dÈduisent directement, dans le cas de la reprÈsentation triviale, 
de la description des groupes de cohomologie
des extensions intermÈdiaires (resp. par zÈro) des systËmes locaux d'Harris-Taylor donnÈs
aux \S 3 (resp. \S 5) de \cite{boyer-compositio}. Le lecteur pourra trouver utile, la rÈÈcriture
de ces rÈsultats dans \cite{boyer-aif}.

\begin{prop} \label{prop-temperee}
Soit $\Pi$ une reprÈsentation automorphe irrÈductible $\xi$-cohomologique tempÈrÈe.
Pour tout $h=1,\cdots,d$ et pour tout $i \neq 0$,
$$[H^i(h,\xi)]\{ \Pi^{\oo,v} \} \qquad \hbox{et} \qquad [H^i_!(h,\xi)]\{ \Pi^{\oo,v} \}$$
sont nuls.
\end{prop}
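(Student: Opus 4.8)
The plan is to reduce the statement to the explicit computation of the cohomology of the Harris–Taylor local systems over the Newton strata, as recalled in \cite{boyer-compositio} and in the references to \cite{boyer-aif}. The point is that for a tempered $\xi$-cohomological automorphic representation $\Pi$, the component $\Pi_v$ is of the form $\mathrm{St}_1(\pi_v)=\pi_v$ with $\pi_v$ an (essentially) square-integrable — in fact cuspidal up to the degeneracy structure — representation, so the depth of degeneracy $s$ equals $1$. One then traces through the spectral sequence (or the distinguished triangle) expressing $[H^i(h,\xi)]\{\Pi^{\oo,v}\}$ and $[H^i_!(h,\xi)]\{\Pi^{\oo,v}\}$ in terms of the cohomology of the open strata $X^{=h}_{I,\bar s_v,1}$ with coefficients in the Harris–Taylor systems, and observes that only the middle-degree contribution survives after projecting onto the $\Pi^{\oo,v}$-isotypic part.

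First I would recall, following \S2, that $\Pi_v\simeq\mathrm{Speh}_s(\pi_v)$ with $s=1$ here, and that the $\Pi^{\oo,v}$-isotypic components of the limits $\varinjlim_I H^i(X^{\geq h}_{I,\bar s_v,1},V_{\xi,\bar\Qm_l})$ and of the analogous compactly-supported groups are governed by the parameters $m_{\Psi_v,\zeta}(\Pi^{\oo,v})$. Because $\Pi$ is tempered, the only local factors $\Psi_v$ at $v$ that can occur in the $\Pi^{\oo,v}$-part are the generic (non-degenerate) ones, attached to a single cuspidal block, so there is no room for the "shifted" constituents that would populate cohomology away from the middle degree. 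Concretely, the Harris–Taylor local system on $X^{=h}_{I,\bar s_v,1}$ whose cohomology feeds a tempered $\Pi^{\oo,v}$ is the one attached to $\pi_v$ with trivial "Speh thickening", and by the purity/weight computation of \cite{boyer-compositio} (the cohomology of an affine, smooth, geometrically induced stratum with coefficients in such a local system, placed in the normalization $[d-h]$) this cohomology is concentrated in a single degree; the $\Pi^{\oo,v}$-projection kills every other degree. Hence $[H^i(h,\xi)]\{\Pi^{\oo,v}\}=0$ and $[H^i_!(h,\xi)]\{\Pi^{\oo,v}\}=0$ for $i\neq 0$.

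The key steps, in order, are: (1) identify $s=1$ for tempered $\Pi$ and record that $\Pi_v$ is generic; (2) invoke the semisimplicity of the $\Pi^{\oo,v}$-isotypic decomposition of $\varinjlim_I H^i(X^{\geq h}_{I,\bar s_v,1},V_{\xi,\bar\Qm_l})$ from \cite{boyer-compositio} (already used in the proof of Proposition~\ref{prop-preHecke}); (3) match the $GL_h(F_v)$-types $\Psi_v$ occurring in the tempered $\Pi^{\oo,v}$-part with a single cuspidal support, so that the Harris–Taylor local system in play is the "non-thickened" one; (4) apply the explicit degree-of-vanishing statement of \cite{boyer-compositio} (\S3 for the intermediate extensions, \S5 for the extensions by zero), in the normalization $[d-h]$, to conclude concentration in degree $0$; (5) do the same for the compactly-supported side.

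The main obstacle is step (3)–(4): one must be careful that after restricting to the tempered $\Pi^{\oo,v}$-component, no non-generic $\Psi_v$ — which would come from a larger Speh parameter — contributes, and that the shift-by-$[d-h]$ normalization in the definition of $[H^i(h,\xi)]$ is exactly the one for which the Harris–Taylor computation gives concentration in degree $0$ rather than in some degree depending on $h$. This is precisely where temperedness ($s=1$) is used decisively: for $s>1$ the same analysis produces nonzero contributions in a symmetric range of degrees around $0$, so the statement is sharp. Once the bookkeeping of the cuspidal support and of the cohomological shift is settled, the vanishing is immediate from the cited results, and no further computation is needed.
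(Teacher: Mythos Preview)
Your approach is essentially the paper's: both reduce to the explicit computations of \cite{boyer-compositio} (restated as Propositions~3.6 and~3.12 of \cite{boyer-aif}) in the special case $s=1$, with the compactly-supported side handled either directly via \S5 of \cite{boyer-compositio} or via the decomposition of $j^{\geq h}_!$ in terms of Harris--Taylor systems on deeper strata.

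Two points in your exposition should be corrected, as they indicate a slight misreading of the setup. First, for tempered $\Pi$ the local component $\Pi_v$ is \emph{generic}, i.e.\ of the form $\st_{t_1}(\pi_{v,1})\times\cdots\times\st_{t_u}(\pi_{v,u})$ with the $\pi_{v,i}$ cuspidal; it is not itself a single square-integrable or cuspidal representation. Second, and more importantly, the local system whose cohomology defines $[H^i(h,\xi)]$ is $V_{\xi,\bar\Qm_l}$ restricted to the stratum, which in the Harris--Taylor parametrization of \cite{boyer-aif} corresponds to taking $\pi_v$ equal to the \emph{trivial} character (this is exactly what the paper says: ``syst\`eme local constant, i.e.\ $\pi_v$ est la repr\'esentation triviale''). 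The Harris--Taylor system is \emph{not} the one attached to the cuspidal support of $\Pi_v$; the representation $\Pi$ enters only through the projection onto the $\Pi^{\oo,v}$-isotypic component, and it is the condition $s=1$ on $\Pi$ that forces the vanishing outside degree $0$ in the cited propositions. Your step~(3) conflates these two roles; once you separate them, your outline matches the paper's proof exactly.
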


\begin{proof}
Le rÈsultat pour $H^i(h,\xi)$ est un cas particulier de la proposition\footnote{laquelle dÈcoule directement
de la proposition 3.6.1 de \cite{boyer-compositio}} 3.6 de \cite{boyer-aif} pour le systËme local constant, 
i.e. $\pi_v$ est la reprÈsentation triviale, et $s=1$. Pour la cohomologie ‡ support compact, on peut soit
Èvoquer la proposition 3.12 de \cite{boyer-aif}, soit utiliser la description, d'aprËs le corollaire 5.4.1
de \cite{boyer-invent2}, de cette extension par
zÈro en termes des systËmes locaux sur les strates de Newton d'indices $h' \geq h$.

\end{proof}

\begin{prop} \label{prop-non-temperee}
Soit $\Pi$ une reprÈsentation automorphe irrÈductible $\xi$-cohomologique non tempÈrÈe de profondeur
de dÈgÈnÈrescence $s>1$ au sens de la dÈfinition \ref{defi-parametre}. Alors 
\begin{itemize}
\item[(i)] pour tout $h>s$, les $[H^i_!(h,\xi)]\{ \Pi^{\oo,v} \}$ sont nuls pour tout $i$;

\item[(ii)] pour tout $h \neq s$, $[H^0_!(h,\xi)]\{ \Pi^{\oo,v} \}$ est nul.
\end{itemize}
\end{prop}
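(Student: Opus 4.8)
The strategy is to reduce everything to the known spectral description of the cohomology of Harris-Taylor local systems on the closed Newton strata recalled in \S\ref{para-rappel-coho}, combined with the fact, recalled before Definition \ref{defi-parametre}, that the local component $\Pi_v$ of a $\xi$-cohomological automorphic $\Pi$ of degeneracy depth $s$ is of the form $\speh_s(\pi_v)$ for a nondegenerate irreducible $\pi_v$. The first step is to fix $\Pi^{\oo,v}$ and use Proposition \ref{prop-preHecke} (and its compact-support analogue) to write $[H^i_!(h,\xi)]\{\Pi^{\oo,v}\}$ in terms of the multiplicities $m_{\Psi_v,\zeta}(\Pi^{\oo,v})$, noting that by the results of \cite{boyer-compositio} the only $\Psi_v$ that can occur in the $\Pi^{\oo,v}$-isotypic part of the cohomology of $j^{\geq h}_{1,!}V_{\xi,\overline\Qm_l}[d-h]$ are built from the cuspidal support of $\pi_v$; concretely, the representations $\Psi_v$ appearing must be (up to unramified twist) of the form $\st_{t}(\pi_v)$-type constituents with $t \leq s$, because the degeneracy depth $s$ controls which Speh pieces of the Harris-Taylor local systems contribute. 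The key input is that $j^{\geq h}_{1,!}V_{\xi}[d-h]$ on the stratum of height $h$ can only see the automorphic representation $\Pi$ through its local factor $\speh_s(\pi_v)$, and the contribution is governed by the (perverse) filtration on the nearby-cycles complex whose graded pieces are intermediate extensions of Harris-Taylor systems indexed by the integers $1 \leq t \leq \min(h,\cdots)$.

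For part (i), the plan is: when $h > s$, the stratum $X^{\geq h}_{I,\bar s_v,1}$ is \og too deep \fg{} for the depth-$s$ representation $\Pi$ to contribute to the \emph{extension by zero} $j^{\geq h}_{1,!}V_{\xi}[d-h]$. Precisely, I would invoke the description in \S5 of \cite{boyer-compositio} (equivalently Proposition 3.12 of \cite{boyer-aif} together with Corollaire 5.4.1 of \cite{boyer-invent2}) expressing $j^{\geq h}_{1,!}V_{\xi}[d-h]$ in terms of the Harris-Taylor local systems $HT(\pi_v,\st_t(\pi_v))$ on strata of height $h' \geq h$; the $\Pi^{\oo,v}$-isotypic part picks out only those $t$ with $t = s$ (since $\Pi_v = \speh_s(\pi_v)$ forces the relevant Speh parameter to have length exactly $s$), and such a local system lives on strata of height $\leq s < h$, hence does not meet $X^{\geq h}$. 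Therefore all $[H^i_!(h,\xi)]\{\Pi^{\oo,v}\}$ vanish. For part (ii), the argument is a degree/weight count: for $h \neq s$ with $h \leq s$, the contribution of $\Pi$ to $H^i_!(h,\xi)$ comes from $H^{i}$ of an intermediate extension of a Harris-Taylor system which, after the shift $[d-h]$, is concentrated in strictly positive degrees (if $h < s$) by the analogue of the temperedness vanishing in Proposition \ref{prop-temperee} applied to the cuspidal $\pi_v$-building block, shifted by the defect $s-h$; the $H^0$-part would require the system to be perverse and its $H^0$ to be nonzero, which by the explicit computation happens only at the single height $h = s$. Combining with part (i) (which already disposes of $h > s$) gives the vanishing of $[H^0_!(h,\xi)]\{\Pi^{\oo,v}\}$ for all $h \neq s$.

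The main obstacle is bookkeeping: one must track carefully how the degeneracy depth $s$ of the \emph{global} $\Pi$ interacts with the \emph{local} perverse filtration on the strata of the special fibre, i.e. identifying exactly which Harris-Taylor local system $HT(\pi_v, \st_t(\pi_v))$ and which cohomological shift carries the $\Pi^{\oo,v}$-isotypic class, and checking that the relevant graded piece in the spectral sequence computing $H^i(X^{\geq h}_{I,\bar s_v,1}, j^{\geq h}_{1,!}V_{\xi}[d-h])$ is degenerate in the range that matters. I expect this to be essentially a careful transcription of \S3 and \S5 of \cite{boyer-compositio} (and the streamlined account in \cite{boyer-aif}) to the non-trivial Speh case, with no genuinely new geometric input; the subtlety is purely in matching indices $h$, $t$, $s$ and the perverse shifts, and in making sure the compact-support ($j_!$) version — rather than the intermediate extension — is the one whose vanishing is being asserted, so that one does not accidentally use a statement about $[H^i(h,\xi)]$ where $[H^i_!(h,\xi)]$ is needed.
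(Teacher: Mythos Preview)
Your approach is correct and is essentially the same as the paper's: both reduce directly to the explicit computation of Proposition 3.12 of \cite{boyer-aif} (itself drawn from \S5 of \cite{boyer-compositio}), specialized here to the trivial local system ($t=1$ in the notation of loc.\ cit.). The paper simply cites that proposition and notes that, in its notation, $n_{s,1}(h,0)\neq 0$ iff $h=s$ --- equivalently, that $[H^i_!(h,\xi)]\{\Pi^{\oo,v}\}\neq 0$ iff $i=s-h\geq 0$ --- from which (i) and (ii) follow at once; your two paragraphs are a correct unpacking of exactly this mechanism.
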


\begin{proof}
Le rÈsultat est donnÈ ‡ la proposition \footnote{laquelle dÈcoule directement des calculs 
de \cite{boyer-compositio} \S 5} 3.12 de \cite{boyer-aif} en prenant $t=1$. En particulier pour (ii),
le rÈsultat dÈcoule du fait que, avec les notations de loc. cit., $n_{s,1}(h,0)$
est non nul si et seulement si $h=s$.
\end{proof}

\rem dans loc. cit. on montre plus prÈcisÈment que $[H^i_!(h,\xi)]\{ \Pi^{\oo,v} \} \neq (0)$
si et seulement si $i=s-h \geq 0$.

\begin{nota} 
Soit $\UC_G(\Pi^{\oo,v})$ l'ensemble des reprÈsentations irrÈductibles automorphes $\Pi'$ de $G(\Am)$ 
telles que $(\Pi')^{\oo,v} \simeq \Pi^{\oo,v}$. 
\end{nota}

\rem d'aprËs le corollaire VI.2.2 de \cite{h-t}, la composante locale 
$\Pi'_v$ d'un $\Pi' \in \UC_G(\Pi^{\oo,v})$ ne dÈpend pas de $\Pi'$ tel que $d_\xi(\Pi'_\oo) \neq 0$, 
cf. le corollaire VI.2.2 de \cite{h-t}.

On suppose pour la fin de ce paragraphe que
$\Pi$ est une reprÈsentation automorphe irrÈductible $\xi$-cohomologique tempÈrÈe dont la composante 
locale en $v$ est 
$$\Pi_v \simeq \st_{t_1}(\pi_{v,1}) \times \cdots \times \st_{t_u}(\pi_{v,u}),$$
o˘ pour $i=1,\cdots,u$, $\pi_{v,i}$ est une reprÈsentation irrÈductible cuspidale de
$GL_{g_i}(F_v)$.

\begin{prop} \label{prop-temperee-explicite}
Avec les notations et les hypothËses prÈcÈdentes concernant $\Pi$, 
\begin{itemize}
\item $[H^0(h,\xi)] \{ \Pi^{\oo,v} \}$ est nulle sauf si tous les $\pi_{v,i}$ pour $i=1,\cdots, u$ 
sont des caractËres.

\item Dans le cas o˘ pour tout $i=1,\cdots,u$, $\pi_{v,i}$ est un caractËre de 
$F_v^\times$ que l'on note $\chi_{v,i}$, on les ordonne de faÁon
que les $r$ premiers correspondent aux non ramifiÈs. On a
alors dans $\groth(h)\{ \Pi^{\oo,v} \}$, l'ÈgalitÈ
$$[H^0(h,\xi)] \{ \Pi^{\oo,v} \} = \Bigl ( \frac{\sharp \ker^1(\Qm,G)}{d} \sum_{\Pi' \in \UC_G(\Pi^{\oo,v})} 
m(\Pi') d_\xi(\Pi'_\oo) \Bigr )  \Bigl ( \sum_{1 \leq k \leq r:~ t_k=h} \Pi_v^{(k)} \otimes \chi_{v,k}
\Xi^{\frac{d-h}{2}} \Bigr )$$
o˘ 
\begin{itemize}
\item $ \ker^1(\Qm,G)$ est le sous-ensemble de $H^1(\Qm,G)$ constituÈ des ÈlÈments qui deviennent 
triviaux dans $H^1(\Qm_{p'},G)$ pour toute place $p'$ de $\Qm$,

\item $\Pi_v^{(k)}:= \st_{t_1}(\chi_{v,1}) \times \cdots \times \st_{t_{k-1}}(\chi_{v,k-1}) \times
\st_{t_{k+1}}(\chi_{v,k+1}) \times \cdots \times \st_{t_u}(\chi_{v,u})$ et
 
\item $\Xi:\frac{1}{2} \Zm \longrightarrow \overline \Zm_l^\times$ 
 est dÈfini par $\Xi(\frac{1}{2})=q_v^{\frac{1}{2}}$. 
\end{itemize}
\end{itemize}
\end{prop}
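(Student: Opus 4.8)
The strategy is to reduce everything, via the parabolic induction structure of the Newton strata recalled at the end of \S\ref{para-shimura}, to the known description of $[H^i(h,\xi)]\{\Pi^{\oo,v}\}$ for the truncated strata $X^{\geq h}_{I,\bar s_v,1}$ in terms of Harris--Taylor local systems given in \cite{boyer-compositio} (and in the reorganized form of \cite{boyer-aif}). First I would invoke Proposition \ref{prop-temperee} to kill all contributions in degree $i\neq 0$, so that the computation of $[H^0(h,\xi)]\{\Pi^{\oo,v}\}$ is really a computation in a single cohomological degree, namely the one where the intermediate extension of a Harris--Taylor local system contributes its socle/cosocle. The point of the first bullet is then the following: the $\Pi^{\oo,v}$-isotypic part of $H^0$ of an intermediate extension $j^{\geq h}_{1,!*}\mathrm{HT}(\pi_v,\Pi_v)[\cdot]$ is, by the spectral-sequence analysis of \cite{boyer-compositio} \S 3 (cf. \cite{boyer-aif} Prop.~3.6), governed by the contribution of the generic fibre combined with the cohomology of the open Newton strata of index $\geq h$; a nonzero $H^0$ forces the local component $\Pi_v$ to be as non-degenerate as possible at every level, which for a tempered $\Pi$ of the given shape $\Pi_v\simeq \st_{t_1}(\pi_{v,1})\times\cdots\times\st_{t_u}(\pi_{v,u})$ happens precisely when each $\pi_{v,i}$ is a (ramified or unramified) character, the Steinberg blocks $\st_{t_i}(\chi_{v,i})$ being exactly the ``$\speh$-dual'' objects that survive in degree $0$. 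I would spell this out by induction on $u$, peeling off one Steinberg block at a time using the geometric induction isomorphism $X^{=h}_{I,\bar s_v}\simeq X^{=h}_{I,\bar s_v,1}\times_{P_{h,d}}GL_d$ and the behaviour of $\st_s$, $\speh_s$ under parabolic induction recorded in \S2.

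For the second bullet I would make the counting explicit. Over $\bar\Qm_l$ the full cohomology $H^{d-1}(X_{I,\bar\eta},V_{\xi,\bar\Qm_l})$ of the Shimura variety decomposes, by \cite{h-t} (Theorem VI.2.? and the multiplicity-one statement, using $\ker^1(\Qm,G)$), as a sum over $\Pi'\in\UC_G(\Pi^{\oo,v})$ with multiplicity $m(\Pi')d_\xi(\Pi'_\oo)$, and the factor $\tfrac{\sharp\ker^1(\Qm,G)}{d}$ is exactly the normalization that appears in Kottwitz's description of the set of connected components / the $L^2$-multiplicity formula. The nearby-cycles spectral sequence for $X_I/\spec\OC_v$ (degenerating after localization, by \cite{boyer-compositio}) identifies the $h$-th graded piece of the monodromy filtration on this with $[H^0(h,\xi)]\{\Pi^{\oo,v}\}$ up to the Tate twist $\Xi^{(d-h)/2}$ coming from the normalization of the Harris--Taylor systems and the shift $[d-h]$ in Notation \ref{nota-hixi}; tracking which $\st_{t_k}(\pi_{v,k})$-block is ``removed'' when one passes to the stratum of index $h$ gives exactly the sum $\sum_{1\le k\le r:\ t_k=h}\Pi_v^{(k)}\otimes\chi_{v,k}$, the restriction to $k\le r$ (unramified characters) being forced because for a ramified $\chi_{v,k}$ the corresponding vector is not fixed by $GL_d(\OC_v)$ and hence does not appear in the $I_0$-invariants computed by \eqref{eq-Ih1}. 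Assembling these identifications, with the Tate twist bookkeeping done via $\Xi(\tfrac12)=q_v^{1/2}$, yields the displayed formula.

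The main obstacle, and the step I would spend the most care on, is the precise matching of normalizations: one has to check that the half-integral twist and the degree shift in $[H^0(h,\xi)]$ combine to give exactly $\Xi^{(d-h)/2}$ and not some other power, and that the global multiplicity constant is $\tfrac{\sharp\ker^1(\Qm,G)}{d}$ rather than, say, $\sharp\ker^1(\Qm,G)$ — this is where an off-by-a-twist or off-by-a-$d$ error would creep in. Concretely this means carefully transporting the statement of \cite{boyer-aif} Prop.~3.6/3.12 (stated for the constant local system and $s=1$) through the geometric induction $X^{=h}\simeq X^{=h}_1\times_{P_{h,d}}GL_d$, keeping track of $\delta_{P_{h,d}}^{1/2}$, and comparing with Kottwitz's counting of points / Matsushima-type decomposition as used in \cite{h-t}. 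Everything else — the vanishing in degrees $\neq 0$, the characterization ``all $\pi_{v,i}$ are characters'', and the inductive peeling of Steinberg blocks — is a direct consequence of the results already recalled in \S\ref{para-rappel-coho} and \S2.
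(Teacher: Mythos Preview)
The paper's proof is a one-line specialization: apply \cite{boyer-aif} Proposition~3.6 with $s=1$ (the tempered case) and $\pi_v$ the trivial character of $F_v^\times$ (i.e.\ the constant Harris--Taylor local system). With these choices the auxiliary term $R_{\pi_{k,v}}(1,t_k)(h,0)$ of loc.\ cit.\ collapses to the trivial representation of $GL_0(F_v)$ exactly when $t_k=h$, and both bullets fall out. Your plan ultimately invokes the same reference but wraps it in machinery that is not needed here --- nearby cycles, monodromy filtration, an induction on $u$ peeling off Steinberg blocks. Once you recognize the statement as a literal specialization of an already-proved formula, none of that scaffolding is required.

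More importantly, your justification for the restriction to unramified characters ($k\le r$) is wrong. You claim that ramified $\chi_{v,k}$ are excluded because they have no $GL_d(\OC_v)$-fixed vectors and hence drop out of the $I_0$-invariants via \eqref{eq-Ih1}. But the proposition is stated at \emph{infinite} level: $[H^0(h,\xi)]$ is the limit over all $I\in\IC_v(h)$ in $\groth(v,h)$, and no invariants under a maximal compact at $v$ are being taken (the remark following the proposition explains how to pass to finite level as a \emph{separate} step). The actual reason only unramified $\chi_{v,k}$ survive is that one is computing the cohomology of the \emph{constant} sheaf $V_\xi$, i.e.\ the Harris--Taylor system attached to the trivial $\pi_v$; in the formula of \cite{boyer-aif} Prop.~3.6 this forces the matching twisting character to lie in the inertial class of the trivial one, hence to be unramified. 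Equivalently, in the displayed answer the factor $\chi_{v,k}\Xi^{(d-h)/2}$ is a representation of the $\Zm$-factor of $\groth(v,h)$ (cf.\ remark~(iii) after Notation~\ref{nota-hixi}), and a character of $F_v^\times$ factors through $\val$ precisely when it is unramified.
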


\begin{proof}
Il s'agit ‡ nouveau de la proposition 3.6 de \cite{boyer-aif} avec $s=1$ et $\pi_v$ la reprÈsentation triviale de
$F_v^\times$: avec les notation de loc. cit., $R_{\pi_{k,v}}(1,t_k)(h,0)$ disparait i.e. c'est la reprÈsentation
triviale de $GL_0(F_v)$.
\end{proof}

\rem ‡ partir de ces descriptions cohomologiques en niveau infini, on retrouve leurs versions en niveau
fini, et maximal en $v$, en utilisant la proposition \ref{prop-preHecke}. En particulier dans la formule de
la proposition, on prend les invariants sous $I$
de $\Pi^{\oo,v} \otimes \Bigl ( \Pi_v^{(k)} \times \speh_h(\chi_{v,k}) \Bigr )$.

\section{Localisation de la cohomologie}

\begin{nota} \label{nota-spl2}
Pour $I$ un niveau fini, soit
$$\Tm_I:=\overline \Zm_l \bigl [T_{w,i}:~w \in \Spl(I) \hbox{ et } i=1,\cdots,d \bigr ],$$
l'algËbre de Hecke associÈe ‡ $\Spl(I)$, o˘ 
 $T_{w,i}$ est la fonction caractÈristique de
$$GL_d(\OC_w) \diag(\overbrace{\varpi_w,\cdots,\varpi_w}^{i}, \overbrace{1,\cdots,1}^{d-i} ) 
GL_d(\OC_w) \subset  GL_d(F_w).$$
\end{nota}

Le rÈsultat suivant tirÈ de \cite{emerton-gee} est la relation d'Eichler-Shimura dÈmontrÈe par
Wedhorn dans \cite{wed}, dans le cadre de nos variÈtÈs de Shimura de Kottwitz-Harris-Taylor.

\begin{theo} (cf. \cite{emerton-gee} 3.3.1) \label{theo-congruence}
Pour tout $w \in \Spl(I)$, l'action de
$$\sum_{i=0}^d(-1)^i q_w^{\frac{i(i-1)}{2}} T_{w,i} \frob_w^{d-i}$$
sur chacun des $H^j(X_{I,\bar \eta},V_{\xi,\overline \Zm_l})$ est nulle.
\end{theo}

Dans la suite, on fixe une place $v \in \Spl$, un idÈal $I \in \IC_v$ tel que $I=I^v$ 
\footnote{En particulier, on a $v \in \Spl(I)$.}
ainsi qu'un idÈal maximal $\mathfrak{m}$ de $\Tm_I$ de corps rÈsiduel 
$\overline \Fm_l$ tel qu'il existe  un entier $1 \leq h \leq d$ et  $i \in \Zm$ tels que
\addtocounter{smfthm}{1}
\begin{equation} \label{eq-m}
H^i(X^{\geq h}_{I,\bar s_v},V_{\xi,\overline \Zm_l})_{\mathfrak m} \neq (0).
\end{equation}
Pour tout $w \in \Spl(I)$, on note
$$P_{\mathfrak{m},w}(X):=\sum_{i=0}^d(-1)^i q_w^{\frac{i(i-1)}{2}} \overline{T_{w,i}} X^{d-i} \in \overline 
\Fm_l[X]$$
le polynÙme de Hecke associÈ ‡ $\mathfrak m$ et 
$$
S_{\mathfrak{m}}(w) := \bigl \{ \lambda \in \Tm_I/\mathfrak m \simeq \overline \Fm_l \hbox{ tel que }
P_{\mathfrak{m},w}(\lambda)=0 \bigr \} ,$$
le multi-ensemble des paramËtres de Satake modulo $l$ en $w$ associÈs ‡ $\mathfrak m$.

\rem on rappelle qu'un multi-ensemble est un couple $(A,m)$ o˘ $A$ est un ensemble appelÈ le support et
$m:A \longrightarrow \Nm^* \cup \{ + \oo \}$ est la multiplicitÈ au sens o˘ $a \in A$ apparait $m(a)$ fois 
dans le multi-ensemble $(A,m)$. On dira qu'un multi-ensemble $(A,m)$ est contenu dans $(A',m')$ si et 
seulement si $A \subset A'$ et pour tout $a \in A$, on a $m(a) \leq m'(a)$.

Avec les notations prÈcÈdentes, l'image $\overline{T_{w,i}}$ de $T_{w,i}$ dans  $\Tm_I/\mathfrak m$
s'Ècrit 
$$\overline{T_{w,i}}=q_w^{\frac{i(1-i)}{2}} \sigma_i (\lambda_1,\cdots, \lambda_d)$$
o˘ $S_{\mathfrak m}(w)=\{ \lambda_1,\cdots,\lambda_d \}$ et $\sigma_i$ dÈsigne la $i$-Ëme fonction
symÈtrique ÈlÈmentaire. 

\begin{nota} \label{nota-mdual}
On notera alors $\mathfrak m^\vee$ l'idÈal maximal de $\Tm_I$ dÈfini par
$$T_{w,i} \in \Tm_I \mapsto q_w^{\frac{i(1-i)}{2}} \sigma_i (\lambda_1^{-1},\cdots,\lambda_d^{-1})
\in \overline \Fm_l.$$
\end{nota}

\begin{defi}
On dÈfinit
$$l_{\mathfrak{m}}(w;\alpha):= \max \Bigl \{ s \hbox{ tel que } \{ \alpha,q_w \alpha,\cdots,q_w ^{s-1} \alpha \}
\subset S_{\mathfrak{m}}(w) \Bigr \}$$
et
$$l_{\mathfrak{m}}(w):= \max_{\alpha \in S_{\mathfrak{m}}(w)} l_{\mathfrak{m}}(w;\alpha).$$
\end{defi}

\rem dans la dÈfinition prÈcÈdente, $\{ \alpha,q_w \alpha,\cdots,q_w^{s-1} \alpha \}$ est considÈrÈ comme un
multi-ensemble et l'inclusion associÈe est relative aux multi-ensembles. En particulier si 
$q_w \equiv 1 \mod l$,
alors $l_{\mathfrak{m}}(w;\alpha)$ est simplement la multiplicitÈ de $\alpha$ dans $S_{\mathfrak{m}}(w)$.

\begin{lemm} \label{lem-preHecke2}
Avec les notations prÈcÈdentes, s'il existe $i$ avec
$$H^i(X_{I,\bar s_v}^{\geq h},V_{\xi,\overline \Zm_l})_{\mathfrak{m}} \otimes_{\overline \Zm_l} 
\overline \Qm_l \neq (0) \quad \hbox{resp. } 
H^i(X_{I,\bar s_v}^{\geq h},j^{\geq h}_! V_{\xi,\overline \Zm_l})_{\mathfrak{m}}
\otimes_{\overline \Zm_l} \overline \Qm_l \neq (0),$$
alors $l_{\mathfrak m}(v) \geq h$.
\end{lemm}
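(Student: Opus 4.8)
The plan is to deduce the statement from Proposition~\ref{prop-preHecke} (and its compactly supported analogue, recalled in the remark that follows it) together with the compatibility of the Satake isomorphism with normalized parabolic induction.

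First I would pass to $\overline\Qm_l$-coefficients. Since $X^{\geq h}_{I,\bar s_v}$ is a scheme over $\bar\Fm_p$ and $l$-adic cohomology commutes with $-\otimes_{\overline\Zm_l}\overline\Qm_l$, the hypothesis says that $H^i(X^{\geq h}_{I,\bar s_v},V_{\xi,\overline\Qm_l})$ --- resp.\ its $j^{\geq h}_!$-version --- contains a system of Hecke eigenvalues $\lambda\colon\Tm_I\to\overline\Qm_l$ congruent to $\mathfrak m$ modulo $l$. Indeed, each $T_{w,i}$ stabilizes the image of the integral cohomology, so $\lambda$ is valued in $\overline\Zm_l$ and has a well-defined reduction $\bar\lambda\colon\Tm_I\to\overline\Fm_l$; and localizing the integral cohomology at $\mathfrak m$ and then inverting $l$ singles out exactly the eigenspaces attached to those $\lambda$ for which $\ker\bar\lambda=\mathfrak m$.

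Next I apply Proposition~\ref{prop-preHecke}, valid in every cohomological degree, together with the analogous identity for $j^{\geq h}_!$. It shows that $\lambda$ is the system of Hecke eigenvalues of a representation of the shape $\Pi^{\oo,v}\otimes\bigl(\speh_h(\zeta)\times\Psi_v\bigr)$ with $\bigl(\speh_h(\zeta)\times\Psi_v\bigr)^{GL_d(\OC_v)}\neq(0)$, where $\zeta$ is an unramified character of $W_v$. By the Iwasawa decomposition the $GL_d(F_v)$-representation $\speh_h(\zeta)\times\Psi_v$ is then spherical, so $\Psi_v$ is spherical as well and the multiset of Satake parameters of $\speh_h(\zeta)\times\Psi_v$ is the disjoint union of a size-$(d-h)$ multiset coming from $\Psi_v$ and of the Satake parameters of $\speh_h(\zeta)$. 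As $\speh_h(\zeta)$ corresponds under local Langlands to $\zeta\{\tfrac{1-h}{2}\}\oplus\cdots\oplus\zeta\{\tfrac{h-1}{2}\}$, the latter is a $q_v$-segment $\{\alpha,q_v\alpha,\dots,q_v^{h-1}\alpha\}$, and $\alpha$ is an $l$-adic unit since the Satake parameters occurring in the cohomology are Weil numbers and $q_v$ is prime to $l$.

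Finally, using the explicit formula $\overline{T_{v,i}}=q_v^{i(1-i)/2}\,\sigma_i(\lambda_1,\dots,\lambda_d)$ with $S_{\mathfrak m}(v)=\{\lambda_1,\dots,\lambda_d\}$ and the equality $\bar\lambda(T_{v,i})=\overline{T_{v,i}}$, one identifies $S_{\mathfrak m}(v)$ with the reduction modulo $l$ of the Satake parameters at $v$ of $\speh_h(\zeta)\times\Psi_v$. Hence $S_{\mathfrak m}(v)$ contains, as a multiset, $\{\bar\alpha,\overline{q_v}\,\bar\alpha,\dots,\overline{q_v}^{\,h-1}\bar\alpha\}$ with $\bar\alpha\neq 0$, which by definition gives $l_{\mathfrak m}(v;\bar\alpha)\geq h$ and \emph{a fortiori} $l_{\mathfrak m}(v)\geq h$. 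The only delicate point is the bookkeeping in the first step, i.e.\ matching the $\mathfrak m$-localization of the integral cohomology with the characteristic-zero eigensystems congruent to $\mathfrak m$; once this is in place, everything else is a direct reading of Proposition~\ref{prop-preHecke} and of the behaviour of Satake parameters under parabolic induction.
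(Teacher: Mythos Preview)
Your proof is correct and follows essentially the same route as the paper: both arguments reduce to Proposition~\ref{prop-preHecke} (and its $j^{\geq h}_!$-analogue) and then read off that the Satake parameters at $v$ of any contributing Hecke eigensystem contain the $q_v$-segment coming from $\speh_h(\zeta)$. The paper's version is terser---it simply recalls that the $GL_h(F_v)$-action factors through $\val\circ\det$ so that the relevant factor is $\speh_h$ of an unramified character with Satake parameters $\{q_v^{\frac{1-h}{2}},\dots,q_v^{\frac{h-1}{2}}\}$---but your more detailed account of the localization bookkeeping and of how Satake parameters behave under $\times$ amounts to the same thing.
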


\begin{proof}
Rappelons que l'action du facteur $GL_h(F_v)$ du LÈvi
$P_{h,d}(F_v)$ sur ${\displaystyle \lim_{\atop{\rightarrow}{I \in \IC_v}}} 
H^i(X_{I,\bar s_v,1}^{\geq h}, V_{\xi,\bar \Qm_l})$
(resp. ${\displaystyle \lim_{\atop{\rightarrow}{I \in \IC_v}}} H^i(X_{I,\bar s_v,1}^{=h},j^{\geq h}_{1,!} V_{\xi,\bar \Qm_l})$)
se factorise par $\val \circ \det: GL_{h}(F_v) \longrightarrow \Zm$ de sorte que le rÈsultat dÈcoule via
(\ref{eq-Ih1}) (resp. de (\ref{eq-Ih2})) de la proposition \ref{prop-preHecke}, 
en remarquant que les paramËtres de Satake des invariants
sous $GL_h(F_v)$ de la reprÈsentation triviale $\speh_h(1_v)$ sont $\{ q_v^{\frac{1-h}{2}}, q_v^{\frac{3-h}{2}},
\cdots , q_v^{\frac{h-1}{2}} \}$.

\end{proof}

\begin{theo} \label{theo1}
Si $l_{\mathfrak{m}}(v)=1$ alors pour toute reprÈsentation algÈbrique $\xi$, 
la localisation $H^i(X_{I,\bar \eta},V_{\xi,\overline \Zm_l})_{\mathfrak{m}}$ en $\mathfrak{m}$
de la cohomologie de $V_{\xi,\overline \Zm_l}$ est nulle pour $i \neq d-1$ et sans torsion, pour $i=d-1$.
\end{theo}

\rem 
dans l'ÈnoncÈ prÈcÈdent, il faut voir la place $v$ comme une place auxiliaire au sens o˘, pour $I$ fixÈ,
dËs qu'il existe $v \in \Spl(I)$ avec $I=I^v \in \IC_v$ telle que $l_{\mathfrak{m}}(v)=1$, alors la localisation
de la cohomologie est sans torsion concentrÈe en degrÈ mÈdian.

\begin{proof}
Nous allons montrer par rÈcurrence sur $h$ de $d$ ‡ $2$ que les 
$H^i(X_{I,\bar s_v}^{\geq h},V_{\xi,\overline \Zm_l})_{\mathfrak{m}}$ sont nuls: d'aprËs le lemme prÈcÈdent
c'est dÈj‡ vrai pour les parties libres, il ne reste donc plus qu'‡ considÈrer la torsion de ces groupes.
Pour $h=d$, les $H^i(X_{I,\bar s_v}^{\geq d},V_{\xi,\overline \Zm_l})$ sont nuls pour $i \neq 0$ et sans 
torsion pour $i=0$, le rÈsultat dÈcoule donc du lemme prÈcÈdent.
Supposons le rÈsultat acquis jusqu'au rang $h+1$ et traitons le cas de $h \geq 2$. ConsidÈrons 
la suite exacte courte de faisceaux pervers sans torsion\footnote{Les strates 
$X^{\geq h}_{I,\bar s_v}$ 
Ètant lisses et $j^{\geq h}$ Ètant affine, les trois termes de la suite exacte sont pervers et sont libres
au sens de la thÈorie de torsion naturelle issue de la structure $\overline \Zm_l$-linÈaire, cf. 
\cite{boyer-torsion} \S 1.1-1.3.}
$$
0 \rightarrow i_{h+1,*} V_{\xi,\overline \Zm_l,|X_{I,\bar s_v}^{\geq h+1}}[d-h-1] \longrightarrow 
j^{\geq h}_! j^{\geq h,*} V_{\xi,\overline \Zm_l,|X^{\geq h}_{I,\bar s_v}}[d-h] \longrightarrow 
V_{\xi,\overline \Zm_l,|X^{\geq h}_{I,\bar s_v}}[d-h] \rightarrow 0.
$$
Il rÈsulte du thÈorËme d'Artin, cf. par exemple le thÈorËme 4.1.1 de \cite{BBD} et, donc, de 
l'affinitÈ des strates $X^{=h}_{I,\bar s_v}$ d'aprËs le thÈorËme \ref{theo-ito}, que 
les $H^i(X_{I,\bar s_v}^{\geq h},j^{\geq h}_! j^{\geq h,*} 
V_{\xi,\overline \Zm_l,|X^{\geq h}_{I,\bar s_v}}[d-h])$ sont nuls pour $i<0$ et sans torsion pour $i=0$,
de sorte que pour $i>0$, on a
\addtocounter{smfthm}{1}
\begin{equation} \label{eq-sec}
0 \rightarrow H^{-i-1}(X^{\geq h}_{I,\bar s_v},V_{\xi,\overline \Zm_l}[d-h]) \longrightarrow
H^{-i}(X^{\geq h+1}_{I,\bar s_v},V_{\xi,\overline \Zm_l}[d-h-1]) \rightarrow 0,
\end{equation}
et pour $i=0$,
\addtocounter{smfthm}{1}
\begin{multline} \label{eq-sec0}
0 \rightarrow H^{-1}(X^{\geq h}_{I,\bar s_v},V_{\xi,\overline \Zm_l}[d-h]) \longrightarrow
H^{0}(X^{\geq h+1}_{I,\bar s_v},V_{\xi,\overline \Zm_l}[d-h-1]) \longrightarrow \\
H^0(X^{\geq h}_{I,\bar s_v},j^{\geq h}_! j^{\geq h,*} V_{\xi,\overline \Zm_l}[d-h] ) \longrightarrow
H^{0}(X^{\geq h}_{I,\bar s_v},V_{\xi,\overline \Zm_l}[d-h]) \rightarrow \cdots
\end{multline}

Ainsi aprËs localisation en $\mathfrak{m}$ et en utilisant l'hypothËse de rÈcurrence, on obtient que les 
$H^i(X_{I,\bar s_v}^{\geq h}, V_{\xi,\overline \Zm_l}[d-h])_{\mathfrak{m}}$ sont nuls pour $i<0$
et sans torsion pour $i=0$. En utilisant que la propriÈtÈ $l_{\mathfrak{m}}(v)=1$ est invariante par dualitÈ,
i.e.
$$l_{\mathfrak{m}}(v)=1 \Leftrightarrow l_{\mathfrak{m}^\vee}(v)=1,$$
on en dÈduit alors par application de la dualitÈ de Verdier que les 
$H^i(X_{I,\bar s_v}^{\geq h}, V_{\xi,\overline \Zm_l}[d-h])_{\mathfrak{m}}$ sont nuls pour $i \neq 0$
et sans torsion pour $i=0$. On est ainsi ramener sur $\overline \Qm_l$ en degrÈ mÈdian o˘ le rÈsultat
dÈcoule du lemme prÈcÈdent.
 
Les mÍmes arguments appliquÈs au cas $h=1$, nous donnent que les 
$H^i(X_{I,\bar s_v},V_{\xi,\overline \Zm_l})_{\mathfrak{m}}$ sont nuls pour $i \neq d-1$
et sans torsion pour $i=d-1$. Le thÈorËme de changement de base lisse fournit 
$H^i(X_{I,\bar \eta},V_{\xi,\overline \Zm_l}) \simeq H^i(X_{I,\bar s_v}^{\geq 1},V_{\xi,\overline \Zm_l})$,
d'o˘ le rÈsultat.

\end{proof}

Une analyse plus fine de la preuve prÈcÈdente permet d'obtenir la prÈcision suivante.

\begin{prop} \label{prop1}
Soit $i \geq 0$ tel que la torsion de $H^{-i}(X_{I,\bar \eta},V_{\xi,\overline \Zm_l}[d-1])_{\mathfrak{m}}$
est non nulle. On a alors $l_{\mathfrak{m}}(v) \geq i+2$.
\end{prop}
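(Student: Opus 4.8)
Le plan consiste \`a reprendre la preuve du th\'eor\`eme \ref{theo1}, en pr\'ecisant cran par cran \`a partir de quelle strate la partie libre de la cohomologie localis\'ee peut encore \^etre non nulle. On observe d'abord que l'argument du th\'eor\`eme \ref{theo1} fournit en fait l'annulation plus g\'en\'erale suivante: $H^j(X^{\geq h}_{I,\bar s_v},V_{\xi,\overline \Zm_l}[d-h])_{\mathfrak m}=(0)$ pour tout $j$ et tout $h>l_{\mathfrak m}(v)$. En effet la r\'ecurrence descendante sur $h$ depuis $d$ fonctionne telle quelle: partant de l'hypoth\`ese de r\'ecurrence au cran $h+1$, la suite exacte courte de faisceaux pervers et le th\'eor\`eme d'Artin (affinit\'e des $X^{=h}_{I,\bar s_v}$, cf. le th\'eor\`eme \ref{theo-ito}) donnent que $H^j(X^{\geq h}_{I,\bar s_v},V_{\xi,\overline \Zm_l}[d-h])_{\mathfrak m}$ est nul pour $j<0$ et sans torsion pour $j=0$ (pour tout syst\`eme local et tout id\'eal maximal soumis \`a la m\^eme condition sur $l$); la dualit\'e de Verdier, qui pr\'eserve cette condition puisque $l_{\mathfrak m^\vee}(v)=l_{\mathfrak m}(v)$ et qu'une torsion \`a la Tate ne change pas cet invariant, donne la m\^eme annulation en degr\'es $j>0$; enfin le lemme \ref{lem-preHecke2} annule la partie libre restante en degr\'e $0$, pr\'ecis\'ement parce que $l_{\mathfrak m}(v)<h$.

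On en d\'eduit alors que d\`es que $l_{\mathfrak m}(v)\leq i+1$, le $\overline \Zm_l$-module $H^0(X^{\geq i+1}_{I,\bar s_v},V_{\xi,\overline \Zm_l}[d-i-1])_{\mathfrak m}$ est sans torsion. On applique pour cela la m\^eme suite exacte courte de faisceaux pervers au cran $h=i+1$: comme $i+2>l_{\mathfrak m}(v)$, le point pr\'ec\'edent donne $H^\bullet(X^{\geq i+2}_{I,\bar s_v},V_{\xi,\overline \Zm_l}[d-i-2])_{\mathfrak m}=(0)$, de sorte que la suite exacte longue de cohomologie identifie $H^0(X^{\geq i+1}_{I,\bar s_v},V_{\xi,\overline \Zm_l}[d-i-1])_{\mathfrak m}$ \`a $H^0(X^{\geq i+1}_{I,\bar s_v},j^{\geq i+1}_!j^{\geq i+1,*}V_{\xi,\overline \Zm_l}[d-i-1])_{\mathfrak m}$, sans torsion par le th\'eor\`eme d'Artin. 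Contrairement \`a la situation du th\'eor\`eme \ref{theo1}, la partie libre de ce $H^0$ peut ici \^etre non nulle: sur la strate o\`u la valeur $l_{\mathfrak m}(v)=i+1$ est atteinte, seule l'absence de torsion subsiste, et c'est bien l\`a que r\'eside l'\og analyse plus fine \fg{}.

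Enfin, supposons la torsion de $H^{-i}(X_{I,\bar \eta},V_{\xi,\overline \Zm_l}[d-1])_{\mathfrak m}$ non nulle; le th\'eor\`eme de changement de base lisse la r\'ealise comme la torsion de $H^{-i}(X^{\geq 1}_{I,\bar s_v},V_{\xi,\overline \Zm_l}[d-1])_{\mathfrak m}$. En it\'erant les isomorphismes (\ref{eq-sec}), on obtient pour $i\geq 1$ l'isomorphisme $H^{-i}(X^{\geq 1}_{I,\bar s_v},V_{\xi,\overline \Zm_l}[d-1])_{\mathfrak m}\simeq H^{-1}(X^{\geq i}_{I,\bar s_v},V_{\xi,\overline \Zm_l}[d-i])_{\mathfrak m}$, et la suite exacte (\ref{eq-sec0}) au cran $h=i$ r\'ealise ce dernier module comme un sous-$\overline \Zm_l$-module de $H^0(X^{\geq i+1}_{I,\bar s_v},V_{\xi,\overline \Zm_l}[d-i-1])_{\mathfrak m}$ (pour $i=0$, le module de d\'epart n'est autre que ce $H^0$). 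Si l'on avait $l_{\mathfrak m}(v)\leq i+1$, l'\'etape pr\'ec\'edente forcerait ce $H^0$, donc aussi $H^{-i}(X^{\geq 1}_{I,\bar s_v},V_{\xi,\overline \Zm_l}[d-1])_{\mathfrak m}$, \`a \^etre sans torsion, ce qui est absurde; d'o\`u $l_{\mathfrak m}(v)\geq i+2$. L'essentiel de la difficult\'e est que l'annulation du lemme \ref{lem-preHecke2} ne survit, sur la strate seuil $X^{\geq i+1}_{I,\bar s_v}$, que sous la forme affaiblie \og sans torsion \fg{}; le reste n'est qu'une reprise soigneuse des arguments du th\'eor\`eme \ref{theo1}.
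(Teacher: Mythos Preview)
Your argument is correct and follows essentially the same route as the paper's own proof. Both rest on the short exact sequence of perverse sheaves, Artin's vanishing (via the affineness of the open Newton strata), the isomorphisms~(\ref{eq-sec}) and the injection in~(\ref{eq-sec0}), together with the lemma~\ref{lem-preHecke2} to kill the free part when $h>l_{\mathfrak m}(v)$, and Verdier duality (using $l_{\mathfrak m^\vee}(v)=l_{\mathfrak m}(v)$) to handle positive degrees. The only difference is organizational: the paper sets $r=l_{\mathfrak m}(v)$ and runs a single descending induction on $h$ from $r$ to $1$, proving at each step that $H^{-i}(X^{\geq h}_{I,\bar s_v},V_{\xi,\overline \Zm_l}[d-h])_{\mathfrak m}$ is null for $i>r-h$ and torsion-free for $i=r-h$, then reads off the case $h=1$; you instead isolate the full vanishing for $h>l_{\mathfrak m}(v)$ as a separate observation, treat the threshold $h=i+1$ by hand, and then use the chain $H^{-i}(X^{\geq 1})\simeq H^{-1}(X^{\geq i})\hookrightarrow H^0(X^{\geq i+1})$ to conclude by contradiction.
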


\rem l'inÈgalitÈ Èvidente $l_{\mathfrak{m}}(v) \leq d$, nous donne en particulier que la torsion de
$H^{0}(X_{I,\bar \eta},V_{\xi,\overline \Zm_l})_{\mathfrak{m}}$ est nulle; on peut donc
comprendre l'ÈnoncÈ prÈcÈdent comme une gÈnÈralisation de ce fait ÈlÈmentaire.

\begin{proof}
Notons $r=l_{\mathfrak{m}}(v)$. En reprenant la preuve du thÈorËme prÈcÈdent,
on obtient que les $H^i(X^{\geq h}_{I,\bar s_v},V_{\xi,\overline \Zm_l}[d-h])_{\mathfrak{m}}$ sont nuls
pour tout $i$ (resp. $i \neq 0$ et sans torsion pour $i=0$) tant que $h>r$ (resp. pour $h=r$).
Montrons ‡ prÈsent par rÈcurrence sur $h$ de $r$ ‡ $1$ que les 
$H^{-i}(X^{\geq h}_{I,\bar s_v},V_{\xi,\overline \Zm_l}[d-h])_{\mathfrak{m}}$ sont nuls pour $i>r-h$ et 
sans torsion pour $i=r-h$. D'aprËs les isomorphismes de (\ref{eq-sec}), on voit que
la nullitÈ de $H^{-i}(X^{\geq h+1}_{I,\bar s_v},V_{\xi,\overline \Zm_l}[d-h-1])_{\mathfrak m}$
pour tout $i>r-h-1$ donne celle de $H^{-i}(X^{\geq h}_{I,\bar s_v},V_{\xi,\overline \Zm_l}[d-h])_{\mathfrak m}$
pour tout $i>r-h$. Le rÈsultat de l'ÈnoncÈ dÈcoule, par contraposition, 
du cas $h=1$ et du changement de base lisse.

\end{proof}

Dans l'argument prÈcÈdent on voit que la torsion peut apparaÓtre, par exemple, ‡ cause de la flËche
$$H^0(X^{\geq r}_{I,\bar s_v},V_{\xi,\overline \Zm_l,|X_{I,\bar s_v}^{\geq r}}[d-r])
\hookrightarrow H^0(X^{\geq r-1}_{I,\bar s_v},j^{\geq r-1}_! j^{\geq r-1,*} 
V_{\xi,\overline \Zm_l,|X^{\geq r-1}_{I,\bar s_v}}[d-r+1])$$
entre deux $\overline \Zm_l$-modules libres ‡ priori non nuls. 
Les exemples de l'introduction o˘ la torsion
est non nulle, illustrent le fait que pour certains $\mathfrak m$, l'injection prÈcÈdente est non stricte.

\rem Sur $\overline \Qm_l$, 
d'aprËs la proposition \ref{prop-non-temperee}, la 
$\{ \Pi^{\oo,v} \}$-composante d'une telle flËche est nulle si $\Pi$ n'est pas tempÈrÈe.

\begin{defi} \label{defi-ref0}
Pour $1 \leq \delta \leq l_{\mathfrak{m}}(v)$, on dÈfinit
$$\mu_{\mathfrak{m}}(v;\delta)= \sharp \Bigl \{ \alpha \in S_{\mathfrak{m}}(v):~ 
l_{\mathfrak{m}}(v;\alpha) \geq \delta. 
\Bigr \}.$$
\end{defi}

Supposons que $\mathfrak{m}$ et $\xi$ sont tels qu'il existe $i \leq 0$ tel que la torsion de
$H^i(X_{I,\bar \eta},V_{\xi,\overline \Zm_l}[d-1])_{\mathfrak{m}}$ est non nulle.

\begin{nota}
Soit $i_{\mathfrak{m},\xi} \geq 0$ maximal tel que pour tout $i<-i_{\mathfrak{m},\xi}$, la torsion de
$H^i(X_{I,\bar \eta},V_{\xi,\overline \Zm_l}[d-1])_{\mathfrak{m}}$ est nulle.
\end{nota}

\begin{lemm}
Pour tout $1 \leq h \leq 1+i_{\mathfrak{m},\xi}$, le localisÈ $H^i(X_{I,\bar \eta}^{\geq h},
V_{\xi,\overline \Zm_l,|X^{\geq h}_{I,\bar s_v}}[d-h])_{\mathfrak{m}}$ est sans torsion pour
$i<h-1-i_{\mathfrak{m},\xi}$ et de torsion non nulle pour $i=h-1-i_{\mathfrak{m},\xi}$.
\end{lemm}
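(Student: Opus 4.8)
The plan is to induct on $h$ from $1$ to $1+i_{\mathfrak m,\xi}$, exactly paralleling the induction used in the proof of Proposition \ref{prop1}, but now tracking the precise cohomological degree in which the torsion first appears. First I would reformulate the definition of $i_{\mathfrak m,\xi}$: by the lisse base change isomorphism $H^i(X_{I,\bar\eta},V_{\xi,\overline\Zm_l})\simeq H^i(X^{\geq 1}_{I,\bar s_v},V_{\xi,\overline\Zm_l})$ used at the end of the proof of Theorem \ref{theo1}, the hypothesis says that $H^{-i}(X^{\geq 1}_{I,\bar s_v},V_{\xi,\overline\Zm_l}[d-1])_{\mathfrak m}$ is torsion-free for $i>i_{\mathfrak m,\xi}$ and has non-zero torsion for $i=i_{\mathfrak m,\xi}$. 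This is precisely the case $h=1$ of the asserted statement, so it serves as the base case.

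For the inductive step I would run the short exact sequence of perverse sheaves
$$0\to i_{h+1,*}V_{\xi,\overline\Zm_l}[d-h-1]\to j^{\geq h}_!j^{\geq h,*}V_{\xi,\overline\Zm_l}[d-h]\to V_{\xi,\overline\Zm_l}[d-h]\to 0$$
already written down in the proof of Theorem \ref{theo1}, together with the Artin-affineness vanishing (Theorem \ref{theo-ito} gives affineness of $X^{=h}_{I,\bar s_v}$, hence Artin's theorem via \cite{BBD} 4.1.1 applies): the middle term has cohomology concentrated in degrees $\geq 0$ and torsion-free in degree $0$. Localising at $\mathfrak m$ this yields, for $i>0$, the isomorphisms of (\ref{eq-sec})
$$H^{-i-1}(X^{\geq h}_{I,\bar s_v},V_{\xi,\overline\Zm_l}[d-h])_{\mathfrak m}\simeq H^{-i}(X^{\geq h+1}_{I,\bar s_v},V_{\xi,\overline\Zm_l}[d-h-1])_{\mathfrak m},$$
and the long exact sequence (\ref{eq-sec0}) in degree $0$. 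Reading (\ref{eq-sec}) from degree $h$ to degree $h+1$: if the statement holds at level $h$, i.e. $H^j(X^{\geq h}_{I,\bar s_v},V_{\xi,\overline\Zm_l}[d-h])_{\mathfrak m}$ is torsion-free for $j<h-1-i_{\mathfrak m,\xi}$ and has non-zero torsion for $j=h-1-i_{\mathfrak m,\xi}$, then shifting indices gives exactly the vanishing (for $j< (h+1)-1-i_{\mathfrak m,\xi}$) and the non-vanishing of torsion (for $j=(h+1)-1-i_{\mathfrak m,\xi}$) at level $h+1$, provided this critical degree is still negative so that (\ref{eq-sec}) applies rather than (\ref{eq-sec0}); this is guaranteed by $h\leq 1+i_{\mathfrak m,\xi}$, which forces $(h+1)-1-i_{\mathfrak m,\xi}\leq 1$, and one checks the boundary degree $0$ separately using (\ref{eq-sec0}) and the torsion-freeness of $H^0(X^{\geq h}_{I,\bar s_v},j^{\geq h}_!j^{\geq h,*}V_{\xi,\overline\Zm_l}[d-h])$.

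The main obstacle I anticipate is the careful bookkeeping of the boundary case, where the critical degree $h-1-i_{\mathfrak m,\xi}$ equals $0$ and one must pass through the long exact sequence (\ref{eq-sec0}) rather than the clean isomorphism (\ref{eq-sec}): there one needs that the map $H^0(X^{\geq h+1}_{I,\bar s_v},V_{\xi,\overline\Zm_l}[d-h-1])_{\mathfrak m}\to H^0(X^{\geq h}_{I,\bar s_v},j^{\geq h}_!j^{\geq h,*}V_{\xi,\overline\Zm_l}[d-h])_{\mathfrak m}$ injects the torsion into a torsion-free module, forcing the torsion of the source to inject into $H^{-1}(X^{\geq h}_{I,\bar s_v},V_{\xi,\overline\Zm_l}[d-h])_{\mathfrak m}$ — this is where the non-vanishing of torsion propagates correctly. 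Once this is in place, the rest is a direct transcription of the dévissage in the proof of Theorem \ref{theo1} and Proposition \ref{prop1}, and no genuinely new input beyond Theorem \ref{theo-ito} and \cite{BBD} is required.
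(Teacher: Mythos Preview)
Your proposal is correct and follows essentially the same approach as the paper: induction on $h$ starting from $h=1$ (base case via the definition of $i_{\mathfrak m,\xi}$ and smooth base change), with the inductive step carried by the isomorphisms (\ref{eq-sec}) in negative degrees and by the exact sequence (\ref{eq-sec0}) at the boundary step $h+1=1+i_{\mathfrak m,\xi}$. One small remark on your phrasing of the boundary case: what actually propagates the non-vanishing is the direct injection $H^{-1}(X^{\geq h}_{I,\bar s_v},\ldots)_{\mathfrak m}\hookrightarrow H^{0}(X^{\geq h+1}_{I,\bar s_v},\ldots)_{\mathfrak m}$ from (\ref{eq-sec0}), which carries the non-zero torsion of the source (known by induction) into the target; your formulation via the torsion-freeness of $H^0(j_!^{\geq h}\ldots)$ is correct but is the reverse identification, and by itself only shows that any torsion in $H^0(X^{\geq h+1},\ldots)$ lies in the image of $H^{-1}$, not that such torsion exists.
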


\begin{proof}
On raisonne par rÈcurrence sur $h$ de $1$ ‡ $1+i_{\mathfrak{m},\xi}$: le cas $h=1$ dÈcoule de la
dÈfinition de $i_{\mathfrak{m},\xi}$ et du changement de base lisse. La propriÈtÈ d'inductivitÈ de $h-1$ ‡ $h$
se dÈduit alors des isomorphismes (\ref{eq-sec}) localisÈs en $\mathfrak m$ et, pour 
$h=1+i_{\mathfrak m,\xi}$, de la suite exacte longue (\ref{eq-sec0}).

\end{proof}

\begin{prop} \label{prop2}
L'action du Frobenius $\frob_v$ sur le $\overline \Fm_l$-espace 
$$
H_{tor}^{-i_{\mathfrak{m},\xi}}(X_{I,\bar \eta},V_{\xi,\overline \Zm_l}[d-1])_{\mathfrak{m}} 
\otimes_{\overline \Zm_l} \overline \Fm_l
$$ 
admet au plus $\mu_{\mathfrak{m}}(v;i_{\mathfrak{m},\xi}+2)$ valeurs propres distinctes.
\end{prop}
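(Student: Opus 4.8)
The plan is to propagate the torsion inward along the Newton stratification at $v$ down to one deep stratum, reduce everything modulo $l$, and there count Frobenius eigenvalues by means of the Speh chains forced by proposition \ref{prop-preHecke}; throughout, set $r:=i_{\mathfrak{m},\xi}+1$. First, I would reduce to the stratum $X^{\geq r}_{I,\bar s_v}$: smooth base change gives $H^{-i_{\mathfrak{m},\xi}}(X_{I,\bar\eta},V_{\xi,\overline{\Zm}_l}[d-1])_{\mathfrak{m}}\simeq H^{-i_{\mathfrak{m},\xi}}(X^{\geq 1}_{I,\bar s_v},V_{\xi,\overline{\Zm}_l}[d-1])_{\mathfrak{m}}$, and then the isomorphisms (\ref{eq-sec}) localised at $\mathfrak{m}$, carried out exactly as in the proof of the lemma preceding this statement, yield a $\frob_v$-equivariant chain of isomorphisms identifying this group with $H^{-1}(X^{\geq r-1}_{I,\bar s_v},V_{\xi,\overline{\Zm}_l}[d-r+1])_{\mathfrak{m}}$, which in turn, by the long exact sequence (\ref{eq-sec0}) for $h=r-1$, injects $\frob_v$-equivariantly into $H^{0}(X^{\geq r}_{I,\bar s_v},V_{\xi,\overline{\Zm}_l}[d-r])_{\mathfrak{m}}$. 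Passing to torsion submodules, and using that a finite $\overline{\Zm}_l[\frob_v]$-module has the same set of eigenvalues as its reduction modulo $l$, it is enough to bound the number of distinct eigenvalues of $\frob_v$ on the torsion of $H^{0}(X^{\geq r}_{I,\bar s_v},V_{\xi,\overline{\Zm}_l}[d-r])_{\mathfrak{m}}$.

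Second, I would show this torsion is governed by the next stratum. Consider the long exact sequence attached, over $X^{\geq r}_{I,\bar s_v}$ and after localisation at $\mathfrak{m}$, to the short exact sequence of free perverse sheaves
$$0 \to i_{r+1,*}V_{\xi,\overline{\Zm}_l}[d-r-1] \to j^{\geq r}_! j^{\geq r,*}V_{\xi,\overline{\Zm}_l}[d-r] \to V_{\xi,\overline{\Zm}_l}[d-r] \to 0.$$
By Artin's theorem together with the affinity of $X^{=r}_{I,\bar s_v}$ (theorem \ref{theo-ito}), the term $H^{0}(X^{\geq r}_{I,\bar s_v},j^{\geq r}_! j^{\geq r,*}V_{\xi,\overline{\Zm}_l}[d-r])_{\mathfrak{m}}=H^{0}_c(X^{=r}_{I,\bar s_v},V_{\xi,\overline{\Zm}_l}[d-r])_{\mathfrak{m}}$ is torsion free, and by the preceding lemma so is $H^{-1}(X^{\geq r}_{I,\bar s_v},V_{\xi,\overline{\Zm}_l}[d-r])_{\mathfrak{m}}$. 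Since a submodule of a torsion free module is torsion free, the torsion of $H^{0}(X^{\geq r}_{I,\bar s_v},V_{\xi,\overline{\Zm}_l}[d-r])_{\mathfrak{m}}$ is an extension of a submodule of the torsion of $H^{1}(X^{\geq r+1}_{I,\bar s_v},V_{\xi,\overline{\Zm}_l}[d-r-1])_{\mathfrak{m}}$ by the torsion of the cokernel of the map $H^{0}(X^{\geq r+1}_{I,\bar s_v},V_{\xi,\overline{\Zm}_l}[d-r-1])_{\mathfrak{m}}\to H^{0}_c(X^{=r}_{I,\bar s_v},V_{\xi,\overline{\Zm}_l}[d-r])_{\mathfrak{m}}$; feeding both through the Bockstein exact sequences, every $\frob_v$-eigenvalue occurring on this torsion occurs already on $H^{0}(X^{\geq r+1}_{I,\bar s_v},V_{\xi,\overline{\Fm}_l}[d-r-1])_{\mathfrak{m}}$. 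It then remains to bound by $\mu_{\mathfrak{m}}(v;r+1)$ the number of distinct $\frob_v$-eigenvalues on this last space.

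Third, this bound comes from the Speh structure of the strata cohomology. Proposition \ref{prop1} gives $l_{\mathfrak{m}}(v)\geq i_{\mathfrak{m},\xi}+2=r+1$, so the stratum $X^{\geq r+1}_{I,\bar s_v}$ does contribute at $\mathfrak{m}$. The refinement with $\overline{\Zm}_l$- and $\overline{\Fm}_l$-coefficients of the computation of \cite{boyer-compositio} behind proposition \ref{prop-preHecke} decomposes $H^{\bullet}(X^{\geq r+1}_{I,\bar s_v},V_{\xi,\overline{\Fm}_l}[d-r-1])_{\mathfrak{m}}$ into pieces of shape $(\speh_{r+1}(\zeta)\times\Psi_v)^{GL_d(\OC_v)}$ on each of which $\frob_v$, acting through the $\Zm$-factor as in remark (ii) of \S\ref{para-rappel-coho}, is a single scalar; the Eichler-Shimura relation of theorem \ref{theo-congruence} forces that scalar into $S_{\mathfrak{m}}(v)$, while the chain of Satake parameters of $\speh_{r+1}(\zeta)$, reduced modulo $l$, forces a sub-multiset $\{\alpha,q_v\alpha,\dots,q_v^{r}\alpha\}\subset S_{\mathfrak{m}}(v)$ with that scalar equal to $\alpha$ up to a fixed twist (a power of $q_v$ from the normalisation of $\speh$, together with $\Pi_{p_v,0}(\varpi_v)$) independent of the piece. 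Thus the distinct eigenvalues correspond, up to this fixed twist, to distinct $\alpha\in S_{\mathfrak{m}}(v)$ with $l_{\mathfrak{m}}(v;\alpha)\geq r+1$, of which there are at most $\mu_{\mathfrak{m}}(v;r+1)=\mu_{\mathfrak{m}}(v;i_{\mathfrak{m},\xi}+2)$; should the scalar be the top rather than the bottom of the chain, one argues with $\mathfrak{m}^\vee$ instead, using $\mu_{\mathfrak{m}^\vee}(v;\delta)=\mu_{\mathfrak{m}}(v;\delta)$ and Verdier duality.

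The hard part is this third step: since the torsion classes are invisible over $\overline{\Qm}_l$, proposition \ref{prop-preHecke} cannot be invoked verbatim and one genuinely needs its $\overline{\Zm}_l/\overline{\Fm}_l$ version from \cite{boyer-compositio}, notably the fact that on the relevant pieces $\frob_v$ acts by a scalar lying at one end of a length-$(r+1)$ Speh chain. A subsidiary nuisance, pervasive throughout, is the bookkeeping of the various twists --- the $\Zm$-shift, the similitude character, and the half-integral powers of $q_v$ from normalised parabolic induction --- which must be checked to be uniform across constituents so as not to inflate the count of distinct eigenvalues.
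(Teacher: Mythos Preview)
Your reduction in Steps~1--2 is fine and essentially matches the paper, but Step~3 contains a genuine gap. You end up needing to bound the Frobenius eigenvalues on $H^{0}(X^{\geq r+1}_{I,\bar s_v},V_{\xi,\overline{\Fm}_l}[d-r-1])_{\mathfrak{m}}$, and to do so you invoke an ``$\overline{\Zm}_l/\overline{\Fm}_l$ version'' of proposition~\ref{prop-preHecke} which you yourself flag as ``the hard part'' and for which you give no argument. No such mod-$l$ Speh decomposition is established in the paper (all of \S\ref{para-rappel-coho} is over $\overline{\Qm}_l$, and the semisimplicity used there fails mod $l$), and your Bockstein reduction is circular precisely at this point: the short exact sequence $0\to H^0(X^{\geq r+1},\overline{\Zm}_l)_{\mathfrak m}/l \to H^0(X^{\geq r+1},\overline{\Fm}_l)_{\mathfrak m}\to H^1(X^{\geq r+1},\overline{\Zm}_l)_{\mathfrak m}[l]\to 0$ feeds the very torsion you are trying to control back into the right-hand term.

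The paper avoids this entirely, and the key idea is one you nearly have in hand but do not use. Before analysing the cokernel, the paper \emph{continues the induction of the preceding lemma to $h>1+i_{\mathfrak m,\xi}$}: from (\ref{eq-sec}) and (\ref{eq-sec0}) one deduces that $H^i(X^{\geq h}_{I,\bar s_v},V_{\xi,\overline{\Zm}_l}[d-h])_{\mathfrak m}$ is torsion-free for every $h\geq r+1$ and every $i\leq 0$. (You already record the two inputs for the first step of this --- that $H^{-1}(X^{\geq r})_{\mathfrak m}$ and $H^0_c(X^{=r})_{\mathfrak m}$ are free --- but never conclude that $H^0(X^{\geq r+1})_{\mathfrak m}$, sandwiched between them, is therefore free.) Once $H^0(X^{\geq r+1},\overline{\Zm}_l)_{\mathfrak m}$ is known to be free, the torsion of interest is identified with the torsion of the cokernel of a map between two \emph{free} modules, with source $H^0(X^{\geq r+1},\overline{\Zm}_l)_{\mathfrak m}$; hence every Frobenius eigenvalue on it is the reduction mod $l$ of an eigenvalue on $H^0(X^{\geq r+1},\overline{\Qm}_l)_{\mathfrak m}$. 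This last space is computed outright by propositions~\ref{prop-non-temperee} and~\ref{prop-temperee-explicite}: only tempered $\Pi$ contribute (cf.\ the remark preceding definition~\ref{defi-ref0}), and for those one must have $\Pi_v\simeq\st_{r+1}(\chi_{v,k})\times\chi_{v,2}\times\cdots\times\chi_{v,u}$ with all characters unramified, which forces the length-$(r+1)$ chain in $S_{\mathfrak m}(v)$ and yields the bound $\mu_{\mathfrak m}(v;r+1)$. No mod-$l$ representation theory is needed, and the bookkeeping of twists you worry about in your last paragraph becomes a characteristic-zero calculation read off from proposition~\ref{prop-temperee-explicite}.
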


\rem comme prÈcÈdemment la proposition ci-dessus est valable pour toute place $v$ telle que 
$I=I^v \in \IC_v$.

\begin{proof}
Continuons les arguments de la preuve du lemme prÈcÈdent pour les $h>1+i_{\mathfrak m,\xi}$. 
Des isomorphismes (\ref{eq-sec}) et de la suite exacte (\ref{eq-sec0}),
on en dÈduit, par rÈcurrence sur $h>1+i_{\mathfrak m,\xi}$, que les $H^i(X_{I,\bar s_v}^{\geq h},
V_{\xi,\overline \Zm_l,|X^{\geq h}_{I,\bar s_v}}[d-h])_{\mathfrak{m}}$ sont sans torsion pour tout $i \leq 0$.
En outre pour $h=1+i_{\mathfrak m,\xi}$, la torsion de
$H^0(X_{I,\bar s_v}^{\geq 1+i_{\mathfrak{m},\xi}},V_{\xi,\overline \Zm_l}
[d-1-i_{\mathfrak{m}, xi}])_{\mathfrak{m}}$ s'obtient comme celle du conoyau  
\begin{multline*}
H^0(X_{I,\bar s_v}^{\geq 2+i_{\mathfrak{m},\xi}},V_{\xi,\overline \Zm_l} 
[d-2-i_{\mathfrak{m},\xi}])_{\mathfrak{m}}  \longrightarrow \\
H^0(X_{I,\bar s_v}^{\geq 1+i_{\mathfrak{m},\xi}},
j^{\geq 1+i_{\mathfrak{m},\xi}}_! j^{\geq 1+i_{\mathfrak{m},\xi},*} V_{\xi,\overline \Zm_l}
[d-1-i_{\mathfrak{m},\xi}])_{\mathfrak{m}}.
\end{multline*}
En utilisant les isomorphismes (\ref{eq-sec}) respectivement pour $h=i_{\mathfrak m,\xi},\cdots,1$
avec $i=i_{\mathfrak m,\xi}-h$, on obtient que la torsion de 
$H^{-i_{\mathfrak{m},\xi}}(X_{I,\bar \eta},V_{\xi,\overline \Zm_l}[d-1])_{\mathfrak{m}}$
s'obtient comme celle du conoyau du morphisme prÈcÈdent.

Ainsi d'aprËs la proposition \ref{prop-non-temperee}, et en utilisant la remarque (ii) qui suit
\ref{nota-hixi}, les valeurs propres de $\frob_v$ cherchÈes sont ‡ 
prendre dans la rÈduction modulo $l$ des $[H^0(2+i_{\mathfrak m,\xi},\xi) ] \{ \Pi^{\oo,v} \}$, pour 
$\Pi$ tempÈrÈe d'aprËs la remarque prÈcÈdant \ref{defi-ref0}, 
telles que pour tout $v \in \Spl(I)$, les paramËtres de Satake en $v$ modulo $l$
sont donnÈs par $\mathfrak m$. Pour une telle reprÈsentation $\Pi$ avec, cf. la proposition
\ref{prop-temperee-explicite}, 
$$\Pi_v \simeq \st_{t_1}(\chi_{v,1}) \times \cdots \times \st_{t_u}(\chi_{v,u}),$$
avec les notations de la proposition \ref{prop-temperee-explicite}, pour avoir des 
vecteurs invariants sous $GL_{d-2-i_{\mathfrak m,\xi}}(\OC_v)$, il faut 
\begin{itemize}
\item qu'il existe $1 \leq k \leq u$ tel que $t_k=2+i_{\mathfrak m,\xi}$,

\item que pour tout $1 \leq i \neq k \leq u$, on ait $t_i=1$ et

\item que les caractËres $\chi_{v,1}, \cdots, \chi_{v,u}$ soient non ramifiÈs.
\end{itemize}
Pour un tel $\Pi$, d'aprËs la proposition \ref{prop-temperee-explicite}, la valeur propre de $\frob_v$ 
associÈe est $\chi_{v,k}(\varpi_v)q_v^{\frac{d-i_{\mathfrak m,\xi}-2}{2}}$, et d'aprËs la proposition
\ref{prop-preHecke}, le multi-ensemble des paramËtres de Satake est
\begin{multline*}
\Bigl \{ \chi_{v,1}(\varpi_v), \cdots, \chi_{v,k-1}(\varpi_v), \\ \chi_{v,k}(\varpi_v) 
q_v^{-\frac{i_{\mathfrak m,\xi}+1}{2}}, 
\chi_{v,k}(\varpi_v) q_v^{-\frac{i_{\mathfrak m,\xi}-1}{2}}, \cdots, \chi_{v,k}(\varpi_v) 
q_v^{\frac{1+i_{\mathfrak m,\xi}}{2}}, \\
\chi_{v,k+1}(\varpi_v),\cdots,\chi_{v,u}(\varpi_v) \Bigr \}.
\end{multline*}
En particulier modulo $l$, ce multi-ensemble de paramËtres de Satake contient un sous-multi-ensemble
de la forme $\{ \alpha, q_v \alpha, \cdots, q_v^{1+i_{\mathfrak m,\xi}} \alpha \}$. On obtient ainsi,
par dÈfinition, au plus
$\mu_{\mathfrak{m}}(v;i_{\mathfrak{m},\xi}+2)$ valeurs propres de Frobenius distinctes.

\end{proof}

\begin{coro} \label{coro-dimension}
On suppose $l \geq d+2$ et supposons que l'ordre de $q_v$ modulo $l$ soit $>d$.
Si $\rho$ est une $\overline \Fm_l$-sous-reprÈsentation galoisienne irrÈductible
dans la torsion de $H^{-i_{\mathfrak{m},\xi}}(X_{I,\bar \eta},V_{\xi,\overline \Zm_l}[d-1])_{\mathfrak{m}}$ 
telle que les valeurs propres de $\frob_v$ sont distinctes alors
$$\dim \rho \leq d-1-i_{\mathfrak{m},\xi}.$$
\end{coro}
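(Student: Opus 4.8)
Le plan est de ramener l'\'enonc\'e \`a la proposition \ref{prop2}, moyennant une majoration purement combinatoire de $\mu_{\mathfrak{m}}(v;i_{\mathfrak{m},\xi}+2)$. Posons $\delta:=i_{\mathfrak{m},\xi}+2$. D'abord, je remarquerais que, la torsion de $H^{-i_{\mathfrak{m},\xi}}(X_{I,\bar\eta},V_{\xi,\overline\Zm_l}[d-1])_{\mathfrak{m}}$ \'etant non nulle puisqu'elle contient $\rho$, la proposition \ref{prop1} appliqu\'ee \`a $i=i_{\mathfrak{m},\xi}$ donne $l_{\mathfrak{m}}(v)\geq\delta$\,; comme $S_{\mathfrak{m}}(v)$ est de cardinal $d$, on a de plus $\delta\leq l_{\mathfrak{m}}(v)\leq d$. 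L'\'enonc\'e d\'ecoulera alors de l'in\'egalit\'e
$$\mu_{\mathfrak{m}}(v;\delta)\ \leq\ d-\delta+1\ =\ d-1-i_{\mathfrak{m},\xi},$$
valable d\`es que l'ordre $n$ de $q_v$ modulo $l$ v\'erifie $n>d$.

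Pour \'etablir cette in\'egalit\'e, je ferais agir le sous-groupe $\langle q_v\rangle$ de $\overline\Fm_l^\times$ par multiplication sur le support de $S_{\mathfrak{m}}(v)$, dont les orbites sont toutes de cardinal $n>d\geq|S_{\mathfrak{m}}(v)|$. Les \'el\'ements du support situ\'es dans une orbite fix\'ee $O$ forment, via un isomorphisme $O\simeq\Zm/n\Zm$, une partie \emph{propre} du cercle $\Zm/n\Zm$, qui se d\'ecompose donc de fa\c{c}on unique en segments maximaux, disons de longueurs $\ell_1,\dots,\ell_m$. Comme $n>\delta-1$, pour $\alpha$ dans $O$ les $\delta$ \'el\'ements $\alpha,q_v\alpha,\dots,q_v^{\delta-1}\alpha$ sont deux \`a deux distincts, et la condition $l_{\mathfrak{m}}(v;\alpha)\geq\delta$ \'equivaut \`a ce qu'ils appartiennent tous \`a $S_{\mathfrak{m}}(v)$, c'est-\`a-dire \`a ce qu'ils soient contenus dans un m\^eme segment\,; le nombre de tels $\alpha$ dans $O$ vaut donc $\sum_j\max(\ell_j-\delta+1,0)$. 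S'il est non nul, en ne sommant que sur les indices $j$ avec $\ell_j\geq\delta$ (il y en a au moins un) on le majore par $\bigl(\sum_j\ell_j\bigr)-(\delta-1)\leq|S_O|-\delta+1$. En sommant ensuite sur les orbites \`a contribution non nulle et en bornant par $d$ la somme des $|S_O|$ correspondants, on obtient $\mu_{\mathfrak{m}}(v;\delta)\leq d-\delta+1$\,; le cas $\mu_{\mathfrak{m}}(v;\delta)=0$ est trivial puisque $\delta\leq d$.

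Il ne resterait plus qu'\`a conclure. D'apr\`es la proposition \ref{prop2} jointe \`a l'\'etape pr\'ec\'edente, $\frob_v$ agit sur $H_{tor}^{-i_{\mathfrak{m},\xi}}(X_{I,\bar\eta},V_{\xi,\overline\Zm_l}[d-1])_{\mathfrak{m}}\otimes_{\overline\Zm_l}\overline\Fm_l$ avec au plus $\mu_{\mathfrak{m}}(v;\delta)\leq d-1-i_{\mathfrak{m},\xi}$ valeurs propres distinctes. Si $\rho$ est une sous-repr\'esentation galoisienne irr\'eductible de la torsion consid\'er\'ee sur laquelle $\frob_v$ a des valeurs propres deux \`a deux distinctes, alors $\frob_v$ y est diagonalisable, de sorte que $\dim\rho$ \'egale le nombre de ces valeurs propres, lesquelles figurent parmi celles de $\frob_v$ sur l'espace pr\'ec\'edent\,; d'o\`u $\dim\rho\leq d-1-i_{\mathfrak{m},\xi}$. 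La partie d\'elicate est l'\'etape combinatoire\,: l'hypoth\`ese $n>d$ y est essentielle, car elle exclut tout enroulement des cha\^ines $\{\alpha,q_v\alpha,\dots,q_v^{\delta-1}\alpha\}$ autour de $\Zm/n\Zm$, enroulement qui mettrait en d\'efaut aussi bien la distinction des $q_v^j\alpha$ que la d\'ecomposition en segments maximaux.
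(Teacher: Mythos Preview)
Your proof is correct. Both your argument and the paper's rest on the same input, namely that every Frobenius eigenvalue on the torsion in question comes (after a fixed twist) from an $\alpha\in S_{\mathfrak m}(v)$ with $l_{\mathfrak m}(v;\alpha)\geq\delta$, where $\delta=i_{\mathfrak m,\xi}+2$; but the way this is exploited differs. You invoke Proposition~\ref{prop2} as a black box and then prove the purely combinatorial inequality $\mu_{\mathfrak m}(v;\delta)\leq d-\delta+1$ by decomposing the support of $S_{\mathfrak m}(v)$ into $q_v$-segments inside each $\langle q_v\rangle$-orbit of $\overline{\Fm}_l^\times$. The paper instead goes back into the proof of Proposition~\ref{prop2} to obtain the stronger statement that for each eigenvalue $\lambda$ in the (normalised) set $V$ one has $\{\lambda,q_v\lambda,\dots,q_v^{\delta-1}\lambda\}\subset S_{\mathfrak m}(v)$, and then argues directly: picking $\lambda_0\in V$ with $q_v\lambda_0\notin V$, one exhibits $\delta-1$ distinct elements $q_v\lambda_0,\dots,q_v^{\delta-1}\lambda_0$ of $S_{\mathfrak m}(v)$ lying outside $V$, whence $|V|\leq d-(\delta-1)$. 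Your route is more modular (it cleanly separates the cohomological input from the combinatorics) while the paper's is shorter once one is willing to reopen the previous proof. One small point: since $S_{\mathfrak m}(v)$ is a multi-ensemble, your segment count literally bounds the number of \emph{distinct} $\alpha$ with $l_{\mathfrak m}(v;\alpha)\geq\delta$; this is exactly what controls the number of distinct Frobenius eigenvalues in the proof of Proposition~\ref{prop2}, so the conclusion is unaffected, but it is worth saying explicitly.
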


\rem en particulier on retrouve un phÈnomËne bien connu en caractÈristique nulle, ‡ savoir que la dimension chute ‡ mesure
qu'on s'Èloigne du degrÈ mÈdian.

\begin{proof}
Notons $V$ (resp. $S$) l'ensemble des valeurs propres de 
$q_v^{-\frac{d-i_{\mathfrak m,\xi}-2}{2}} \rho(\frob_w)$ (resp. les paramËtres de Satake modulo $l$ associÈs 
‡ $\mathfrak m$). D'aprËs la proposition prÈcÈdente si $\lambda \in V$ alors 
$\{ \lambda, q_v \lambda, \cdots, q_v^{i_{\mathfrak m,\xi}+1} \lambda \} \subset S$. En particulier, les valeurs propres
Ètant supposÈes distinctes, on a $V \subset S$ et donc le cardinal de $V$ est $\leq d$.
Notons alors que, comme l'ordre de $q_v$ est $>d$, alors $q_vV$ n'est pas inclus dans $V$: en effet sinon 
on aurait $q_v^n V \subset V$ et $V$ contiendrait un ensemble de la forme 
$\{ \alpha,q_v \alpha,\cdots, q_v^d \alpha \}$ ce qui n'est pas car $V$ est de cardinal $\leq d$.
Prenons alors $\lambda_0 \in V$ tel que $\lambda_0 q_v \not \in V$,
i.e. $\lambda_0 q_v^{2+i_{\mathfrak m,\xi}} \not \in S$.
On en dÈduit alors que pour tout $k=1,\cdots,1+i_{\mathfrak m,\xi}$, l'ÈlÈment $\lambda_0 q_v^{k}$
appartient ‡ $S$ mais pas ‡ $V$, d'o˘ le rÈsultat.
\end{proof}

%

Le rÈsultat suivant est l'analogue, dans le cadre restrictif des variÈtÈs de Shimura
simples de Kottwitz-Harris-Taylor, du thÈorËme principal de \cite{scholze-torsion}, cf. aussi le 
thÈorËme 6.3.1 de \cite{scholze-cara}.

\begin{theo} \label{theo-3}
Soit $\mathfrak m$ vÈrifiant (\ref{eq-m}), il existe alors une reprÈsentation continue
$$\overline \rho_{\mathfrak m}: G_F \longrightarrow GL_d(\overline \Fm_l)$$
non ramifiÈe ‡ toutes les places ne divisant pas $I$ et telle que pour tout $w \in \Spl(I)$, le polynÙme
caractÈristique de $\overline \rho_{\mathfrak m}(\frob_w)$ est
$$P_{\mathfrak m,w}(X):=
\sum_{i=0}^d(-1)^i q_w^{\frac{i(i-1)}{2}} \overline{T_{w,i}} X^{d-i} \in \Tm_I/\mathfrak m \simeq 
\overline \Fm_l.$$
\end{theo}

\begin{proof}
On choisit une place $v$ telle que, avec les notations prÈcÈdentes, $I=I^v \in \IC_v$, et on reprend
la preuve du thÈorËme \ref{theo1}. Afin de formaliser l'argument introduisons
la notion suivante: on dira d'un $\Tm_{I}$-module $M$ qu'il vÈrifie la propriÈtÈ \textbf{(P)},
s'il admet une filtration finie 
$$(0)=\Fil^0(M) \subset \Fil^1(M) \cdots \subset \Fil^r(M)=M$$
telle que pour tout $k=1,\cdots,r$, il existe
\begin{itemize}
\item une reprÈsentation automorphe $\Pi_k$ irrÈductible et entiËre
de $G(\Am)$, apparaissant dans la cohomologie de $X_{\IC,\overline \eta_v}$ ‡ coefficients dans 
$V_{\xi,\overline \Qm_l}$,

\item une reprÈsentation irrÈductible entiËre $\widetilde \Pi_{k,v}$ de mÍme support cuspidal
que $\Pi_{k,v}$

\item et un $\Tm_{I}$-rÈseau stable $\Gamma$ de 
$(\Pi^{\oo,v}_k)^{I^v} \otimes \widetilde \Pi_{k,v}^{GL_d(\OC_v)}$ tel que
\begin{itemize}
\item soit $\gr^k(M)$ est libre et isomorphe ‡ $\Gamma$,

\item soit $\gr^k(M)$ est de torsion et un sous-quotient de $\Gamma/\Gamma'$ pour 
$\Gamma' \subset \Gamma$ un deuxiËme $\Tm_I$-rÈseau stable.
\end{itemize}
\end{itemize}
La propriÈtÈ  \textbf{(P)} est clairement stable par extensions, par sous-quotients et, en remplaÁant la 
condition $\xi$-cohomologique par $\xi^\vee$-cohomologique, par dualitÈ.

Pour un tel $\Tm_I$-module $M$ et pour tout $k$ tel que $\gr^k(M)$ non nul, le systËme
de paramËtres de Satake modulo $l$ de $\gr^k(M) \otimes_{\overline \Zm_l} \overline \Fm_l$
est la rÈduction modulo $l$ d'un systËme sur $\overline \Zm_l$ associÈ ‡ une reprÈsentation automorphe
apparaissant dans la cohomologie de $X_{\IC,\overline \eta_v}$ ‡ coefficients dans 
$V_{\xi,\overline \Qm_l}$. D'aprËs \cite{h-t} ‡ cette reprÈsentation automorphe est associÈe une reprÈsentation galoisienne
dont la rÈduction modulo $l$ sera telle qu'aux places non ramifiÈes, les frobenius auront pour valeurs propres
les paramËtres de Satake modulo $l$ donnÈs en une telle place par $\gr^k(M) \otimes_{\overline \Zm_l} \overline \Fm_l$.
Ainsi il suffit de montrer que les $H^i(X^{\geq 1}_{I,\bar s_v},V_{\xi,\overline \Zm_l})$ vÈrifient la propriÈtÈ \textbf{(P)}.

Pour ce faire nous allons montrer par rÈcurrence sur $h$ de $d$ ‡ $1$, que les $H^i(X^{\geq h}_{I,\bar s_v},V_{\xi,\overline \Zm_l})$ 
vÈrifient la propriÈtÈ \textbf{(P)}.
On sait dÈj‡ que c'est le cas pour les parties libres, cf. la proposition \ref{prop-temperee-explicite}, 
et donc c'est vrai pour $h=d$.
Supposons donc le rÈsultat acquis jusqu'au rang $h+1$ et considÈrons la suite exacte courte
$$
0 \rightarrow i_{h+1,*} V_{\xi,\overline \Zm_l,|X_{I,\bar s_v}^{\geq h+1}}[d-h-1] \longrightarrow 
j^{\geq h}_! j^{\geq h,*} V_{\xi,\overline \Zm_l,|X^{\geq h}_{I,\bar s_v}}[d-h] \longrightarrow 
V_{\xi,\overline \Zm_l,|X^{\geq h}_{I,\bar s_v}}[d-h] \rightarrow 0.
$$
D'aprËs l'hypothËse de rÈcurrence, les groupes de cohomologie de 
$i_{h+1,*} V_{\xi,\overline \Zm_l,|X_{I,\bar s_v}^{\geq h+1}}[d-h-1]$ vÈrifient \textbf{(P)} ainsi que
ceux de $j^{\geq h}_! j^{\geq h,*} V_{\xi,\overline \Zm_l,|X^{\geq h}_{I,\bar s_v}}[d-h]$ en degrÈ $i \leq 0$
puisqu'ils sont soit nuls pour $i<0$, soit sans torsion pour $i=0$. On en dÈduit alors
que les groupes de cohomologie de $V_{\xi,\overline \Zm_l,|X^{\geq h}_{I,\bar s_v}}[d-h]$
vÈrifient \textbf{(P)} en degrÈ $i \leq 0$ et donc, par dualitÈ, pour tout $i$.

\end{proof}

\rem moralement notre preuve est dans l'esprit trËs proche de celle du thÈorËme 6.3.1 de \cite{scholze-cara}, i.e.
toute classe de torsion dans la cohomologie de $X_{I}$ se relËve en caractÈristique nulle dans la cohomologie
en degrÈ mÈdian d'une variÈtÈ d'Igusa.\footnote{Pour une telle propriÈtÈ le choix de la place $v \in \Spl(I)$ est libre et il n'est pas
difficile de gÈnÈraliser ce fait pour toute place $v \in \Spl$.}
En particulier on n'obtient pas de \og nouveaux \fg{} systËmes de paramËtres de Satake, ce qui d'aprËs \cite{scholze-cara}
semble Ítre un phÈnomËne partagÈ par les variÈtÈs de Shimura contrairement ‡ ce qui se passe dans le cas gÈnÈral, cf.
\cite{scholze-torsion}.

\begin{hypo} \label{hypo-theta}
Si $\theta: G_F \longrightarrow GL_n(\overline \Fm_l)$ est une
reprÈsentation irrÈductible continue telle que pour tout $v \in \Spl(I)$, on a 
$$P_{\mathfrak m,v}(\theta(\frob_v))=0 \qquad 
\hbox{resp. } P_{\mathfrak m^\vee,v}(\theta(\frob_v))=0$$
alors $\theta$ est Èquivalent ‡ $\overline \rho_{\mathfrak m}$ (resp. ‡ $\overline \rho_{\mathfrak m^\vee}$).
\end{hypo}

\rem d'aprËs \cite{emerton-gee}, l'hypothËse \ref{hypo-theta} est vÈrifiÈe si 
\begin{itemize}
\item soit $\overline \rho_{\mathfrak m}$ est induit d'un caractËre de $G_K$ pour $K/F$ une extension
galoisienne cyclique;

\item soit $l \geq d$ et $SL_d(k) \subset \overline \rho_{\mathfrak m}(G_F) \subset \overline \Fm_l^\times
GL_d(k)$ pour un sous-corps $k \subset \overline \Fm_l$.
\end{itemize}

\begin{theo} \label{theo2}
Supposons que $\overline \rho_{\mathfrak m}$ vÈrifie l'hypothËse \ref{hypo-theta} et que $l \geq d+2$. 
Alors les $H^i(X_{I,\bar \eta},V_{\xi,\overline \Zm_l})_{\mathfrak m}$ sont sans torsion.
\end{theo}

\rem on obtient ainsi une version amÈliorÈe du thÈorËme  3.4.2 de \cite{emerton-gee}. 

\begin{proof}
En utilisant le fait que $\mathfrak m^\vee$ vÈrifie, par hypothËse, 
la mÍme condition que $\mathfrak m$, il suffit de montrer 
que les $H^i(X_{I,\bar \eta}, V_{\xi,\overline \Zm_l}[d-1])_{\mathfrak m}$ ou de maniËre Èquivalente,
que les $H^i(X_{I,\bar \eta}, V_{\xi,\overline \Zm_l}[d-1])_{\mathfrak m^\vee}$,
sont sans torsion pour $i \leq 0$:
en effet s'il existait $i >0$ tel que la torsion de 
$H^i(X_{I,\bar \eta}, V_{\xi,\overline \Zm_l}[d-1])_{\mathfrak m}$ Ètait non nulle, alors par dualitÈ
celle de $H^{-i+1}(X_{I,\bar \eta}, V_{\xi,\overline \Zm_l}[d-1])_{\mathfrak m^\vee}$ serait aussi non nulle.

On raisonne alors par l'absurde et on note comme prÈcÈdemment $i_{\mathfrak m,\xi} \geq 0$ le plus
grand indice $i$ tel que la torsion de $H^{-i}(X_{I,\bar \eta},V_{\xi,\overline \Zm_l}[d-1])_{\mathfrak m}$
est non nulle. Soit alors $\theta$ une reprÈsentation de $G_F$ obtenue comme sous-quotient irrÈductible
de la torsion de $H^{-i_{\mathfrak m,\xi}} (X_{I,\bar \eta},V_{\xi,\overline \Zm_l}[d-1])_{\mathfrak m}$.
D'aprËs la relation de congruence \ref{theo-congruence} et l'hypothËse prÈcÈdente, on en dÈduit que
$\theta$ est isomorphe ‡ $\overline \rho_{\mathfrak m}$ et que donc, pour tout $v \in \Spl(I)$,
les valeurs propres de $\theta(\frob_v)$ correspondent aux paramËtres de Satake modulo $l$ donnÈs 
par $\mathfrak m$, ‡ multiplication par $q_v^{\frac{d-1}{2}}$ prËs.

Comme $l-1 > d$, on choisit $v \in \Spl(I)$ tel que l'ordre de $v_{|\Qm}$ modulo $l$ est 
$> d$. Soit alors $q_v^{\frac{d-1}{2}}\lambda$ une valeur propre de $\frob_v$ agissant sur la torsion de 
$H^{-i_{\mathfrak m,\xi}}(X_{I,\bar \eta},V_{\xi,\overline \Zm_l}[d-1])_{\mathfrak m}$: la rÈduction modulo
$l$ de $\lambda$ est un paramËtre de Satake modulo $l$ de $\mathfrak m$.
D'aprËs la preuve de la proposition \ref{prop2}, il existe alors 
une reprÈsentation automorphe $\Pi$ dont la composante $\Pi_v$ est de la forme
$$\Pi_v \simeq \st_{2+i_{\mathfrak m,\xi}}(\chi_{v,1}) \times \chi_{v,2} \times \cdots \times \chi_{v,u},$$
pour $\chi_{v,1}, \cdots, \chi_{v,u}$ des caractËres non ramifiÈs de $F_v^\times$, et
avec $\lambda=\chi_{v,1}(\varpi_v)q_v^{-\frac{i_{\mathfrak m,\xi}+1}{2}}$.
Les paramËtres de Satake modulo $l$ sont alors donnÈs par
$$
\Bigl \{ \chi_{v,1}(\varpi_v)q_v^{-\frac{i_{\mathfrak m,\xi}+1}{2}},
 \chi_{v,1}(\varpi_v)q_v^{-\frac{i_{\mathfrak m,\xi}-1}{2}}, \cdots,\chi_{v,1}
(\varpi_v)q_v^{\frac{i_{\mathfrak m,\xi}+1}{2}}, 
\chi_{v,2}(\varpi_v), \cdots \chi_{v,u}(\varpi_v) \Bigr \}.
$$
En particulier, on remarque que si $\lambda$ est un paramËtre de Satake modulo $l$ de $\mathfrak m$
‡ la place $v$ alors $q_v\lambda$ aussi. Comme par hypothËse l'ordre de $q_v$ modulo $l$ est $> d$,
alors $q_v^i \lambda$ serait un paramËtre de Satake modulo $l$ de $\mathfrak m$ ‡ la place $v$, 
pour tout $i=0,\cdots,d$, ce qui donnerait $d+1$ paramËtres distincts ce qui ne se peut pas, d'o˘ la 
contradiction.

\end{proof}

\bibliographystyle{siam}
\bibliography{bib-ok}

\end{document}